\def\qed{\hfill\ifhmode\unskip\nobreak\fi\quad\ifmmode\Box\else\hfill$\Box$\fi}
\def\ite#1{\hfill\break${}$\hbox to 50pt {\quad(#1)\hfill}}
\def\eps{{\varepsilon}}
\def\phii{{\varphi}}
\newtheorem{thm}{Theorem}[section]
\newtheorem{rem}[thm]{Remark}
\newtheorem{lem}[thm]{Lemma}
\newtheorem{conj}[thm]{Conjecture}
\newtheorem{prop}[thm]{Proposition}%[section]
\def\ex{{\rm{ex}}}
\begin{document}

\title{ Hypergraphs not containing a tight tree with a bounded trunk}

\pagestyle{myheadings} \markright{{\small{\sc F\"uredi, Jiang, Kostochka, Mubayi, Verstra\"ete:  Tur\'an numbers of tight trees}}}

\author{
\hspace{0.8in} Zolt\'an F\" uredi\thanks{Research supported by grant K116769
from the National Research, Development and Innovation Office NKFIH, and
by the Simons Foundation Collaboration grant \#317487.}
\and
Tao Jiang\thanks{Research partially supported by NSF award DMS-1400249.}
\and
Alexandr Kostochka\thanks{Research of this author is supported in part by NSF grant
 DMS-1600592 and by grants 15-01-05867  and 16-01-00499 of the Russian Foundation for Basic Research.
} \hspace{0.8in} \and
Dhruv Mubayi\thanks{Research partially supported by NSF award DMS-1300138.} \and Jacques Verstra\"ete\thanks{Research supported by NSF award DMS-1556524.}
}

\date{ December 10,  2017} % \today}

\maketitle

\vspace{-0.3in}

\begin{abstract}
An $r$-uniform hypergraph is a  {\it  tight $r$-tree} if
 its edges can be  ordered so that every
edge $e$ contains a vertex $v$ that does not belong to any preceding edge
and the set $e-v$ lies in some preceding edge.
%its edges can be ordered as $e_1,\dots, e_t$ so
%that for each $i\geq 2$, there are a vertex $v\in e_i$ and $1\leq s\leq i-1$ such that $v\notin \bigcup_{j=1}^{i-1}e_j$ and
%$e_i-v\subset e_s$.
A  conjecture of Kalai~\cite{Kalai}, generalizing the Erd\H{o}s-S\'os Conjecture  for trees, asserts that
 if $T$ is a tight $r$-tree with $t$ edges and $G$ is an $n$-vertex  $r$-uniform hypergraph containing no copy of $T$ then $G$ has at most
 $\frac{t-1}{r}\binom{n}{r-1}$ edges.

A {\em trunk} $T'$ of a tight $r$-tree $T$ is a tight subtree such that every edge of $T-T'$ has $r-1$ vertices in some edge of $T'$
and a vertex outside $T'$. For $r\ge 3$, the only nontrivial family of tight $r$-trees for which this conjecture has been proved  is 
 the family of $r$-trees with trunk size one  in~\cite{FF} from 1987.
  Our main result is an asymptotic version of Kalai's conjecture for all tight trees $T$ of bounded
trunk size.   This follows from { our} upper bound   on the size of a $T$-free $r$-uniform hypergraph $G$ in terms of the size of its shadow.
%However, we observe that this is equivalent to the same result about the number of edges, not only for this question but also for
%some other questions in extremal set theory.   
We also give a short proof of Kalai's conjecture for  tight $r$-trees with at most four edges. In particular, for $3$-uniform
hypergraphs, our result on the tight path of length $4$ implies the intersection shadow theorem of Katona \cite{Katona}.
%The case of  the  3-uniform tight path with five edges remains open. 
%shows that many of the shadow theorems in the literature %are not really stronger than their nonshadow versions.

\end{abstract}

\section{
Results and history of tight trees}

%Introduction}
 Tur\'{a}n-type problems are among central in combinatorics. For  integers $n\geq r\geq 2$ and an
$r$-uniform hypergraph ({\em $r$-graph}, for short) $H$, the {\em Tur\'an number} $\ex_r(n,H)$ is the largest
$m$ such that there exists an $n$-vertex $r$-graph $G$ with $m$ edges that does not contain $H$.
One of  well-known conjectures in extremal graph theory is the Erd\H{o}s-S\'os Conjecture (see \cite{Erdos-Sos-Tree})
that {\em every $n$-vertex graph $G$ with more than $n(t-1)/2$ edges contains every tree with $t$ edges as a subgraph}. In other
words, they conjecture that $\ex_2(n,T)\leq n(t-1)/2$ for each tree with $t$ edges.
The conjecture, if true, would be best possible whenever $t$ divides $n$, as seen by taking $G$ to be the disjoint union of $K_t$'s.
There are many partial results on the conjecture. The most significant progress on the conjecture was made by Ajtai, Koml\'os,
Simonovits, and Szemer\'edi~\cite{AKSS}, who solved the conjecture for all sufficiently large $t$.

In 1984, Kalai~\cite{Kalai} made a more general conjecture for $r$-graphs.
To describe the conjecture, we need the following notion of  hypergraph trees.
Let $r\geq 2$ be an integer.
An $r$-graph $T$ is called a tight {\it $r$-tree} if its edges can be ordered as $e_1,\dots, e_t$ so that
\begin{equation}\label{tree-definition}
\parbox{5.6in}{\em for each $i\geq 2$, there are a vertex $v\in e_i$ and $1\leq s\leq i-1$ such that $v\notin \bigcup_{j=1}^{i-1}e_j$ and
$e_i-v\subset e_s$.}
\end{equation}

Note that a graph tree is a tight $2$-tree. We write $e(H)$ for the number of edges in $H$.
\begin{conj}[Kalai 1984, see in~\cite{FF}] \label{kalai}
Let $r\geq 2$ and let $T$ be a tight $r$-tree with $t\ge 2$ edges.
%Every  $n$-vertex $r$-graph $G$ that does not contain $T$ %as a subgraph
% has at most
Then $\ex_r(n,T)\leq \frac{t-1}{r}\binom{n}{r-1}$.
\end{conj}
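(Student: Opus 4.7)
The plan is to proceed by induction on the number of edges $t$. The base case $t=2$ is immediate: the unique tight $2$-edge $r$-tree consists of two edges sharing $r-1$ vertices, so in a $T$-free $r$-graph every $(r-1)$-set lies in at most one edge, giving at most $\frac{1}{r}\binom{n}{r-1}$ edges. For the inductive step, fix a canonical ordering $e_1,\dots,e_t$ as in~\eqref{tree-definition}, let $v$ be the vertex of $e_t$ absent from $e_1,\dots,e_{t-1}$, and let $s<t$ be the index with $e_t-v\subset e_s$. Set $T':=T-v$, a tight $r$-tree on $t-1$ edges.

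Suppose for contradiction that $G$ is an $n$-vertex $T$-free $r$-graph with $e(G)>\frac{t-1}{r}\binom{n}{r-1}$. I would first clean up $G$ by iteratively deleting vertices of degree below a threshold of order $\binom{n'-1}{r-2}$, producing a subgraph $G'\subseteq G$ of large minimum degree whose edge count still exceeds the inductive threshold $\frac{t-2}{r}\binom{|V(G')|}{r-1}$ for $T'$. By induction, $G'$ contains a copy $\phi(T')$ of $T'$. The heart of the matter is the \emph{extension step}: find $x\in V(G')\setminus\phi(V(T'))$ with $(\phi(e_s)\setminus\{\phi(u)\})\cup\{x\}\in E(G')$, where $u$ is the unique vertex of $e_s\setminus(e_t-v)$. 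The minimum-degree hypothesis supplies many edges of $G'$ through the $(r-1)$-set $S:=\phi(e_s)\setminus\{\phi(u)\}$, but a priori every such extension could be trapped inside $\phi(V(T'))$.

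To circumvent this extension difficulty I would aim instead for the stronger shadow inequality $e(G)\leq\frac{t-1}{r}|\partial G|$ for every $T$-free $r$-graph $G$, where $\partial G$ is the $(r-1)$-shadow, and then apply the Kruskal--Katona inequality $|\partial G|\leq\binom{n}{r-1}$ to recover Conjecture~\ref{kalai}. The advantage of the shadow version is that extensions landing inside a putative copy of $T'$ still bind particular shadow sets, so the counting can be charged against $\partial G$ rather than against $\binom{n}{r-1}$; the double count of pairs $(S,e)$ with $S\in\partial G$, $e\in E(G)$ and $S\subset e$ then closes even when a given embedding of $T'$ is structurally obstructed.

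The main obstacle is precisely this extension step, which is what makes the Erd\H{o}s--S\'os conjecture notoriously hard already in the graph case $r=2$, and is aggravated for $r\ge 3$ by the need to choose a new vertex compatible with an entire $(r-1)$-set. For tight trees of bounded trunk size the obstacle becomes tractable: the trunk controls how many shadow sets each edge must support, enabling a regularity or absorption argument on the trunk embedding to yield the asymptotic bound announced in the abstract. For $t\le 4$ the short list of tight $r$-trees admits a direct case analysis, and the resulting inequality for the tight path on four edges specializes to Katona's intersection shadow theorem when $r=3$.
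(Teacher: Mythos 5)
This statement is \emph{Kalai's Conjecture}, which the paper explicitly presents as an open problem; the paper does not prove it, and only establishes partial results (Theorem~\ref{FF} for star-shaped trees, Theorem~\ref{main} asymptotically for bounded trunk, Theorem~\ref{4e} for $t\le 4$). Your proposal likewise does not prove it, and to your credit you say so: you identify the extension step as the obstacle and explicitly note that ``a priori every such extension could be trapped inside $\phi(V(T'))$'' and that this is ``what makes the Erd\H{o}s--S\'os conjecture notoriously hard already in the graph case.'' An argument that names its own gap without closing it is an outline, not a proof.

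Two further points. First, passing to the shadow inequality $e(G)\le \frac{t-1}{r}|\partial G|$ does not ``circumvent the extension difficulty'': the paper's Theorem~\ref{th:equi} shows that the shadow statement (Conjecture~\ref{main-question}) is \emph{equivalent} to Kalai's Conjecture via a packing argument, so proving one is exactly as hard as proving the other. It may be a more convenient vehicle for induction (the paper does use it for its partial results), but it carries the same core difficulty. Second, the inequality $|\partial G|\le \binom{n}{r-1}$ is not Kruskal--Katona; it is trivial, since $\partial G$ is a family of $(r-1)$-subsets of an $n$-set. Kruskal--Katona gives a \emph{lower} bound on $|\partial G|$ in terms of $e(G)$, which points in the opposite direction from what you need. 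Your base case $t=2$ and the identification of which cases become tractable (bounded trunk, $t\le 4$) are correct and consistent with what the paper actually proves, but the general conjecture remains open for both you and the authors.
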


Kalai observed that his conjecture, if true,  is asymptotically optimal using constructions obtained from  partial Steiner systems due to R\"odl~\cite{Rodl}.
The recent work of Keevash~\cite{Keevash}  (see also~\cite{GKLO}) on the existence of designs show that in fact
 for every $r\geq 2$ and $t$ there are infinitely many $n$ for which
there is an $n$-vertex $r$-graph $G$ with $e(G)=\frac{t-1}{r}\binom{n}{r-1}$ that  contains none of the tight $r$-trees with $t$ edges. 
For example, this bound can be achieved  for all $n> n_0(r,t)$ when some divisibility properties
hold, e.g., $n-r+2$ is divisible by $(t+r-1)!$.
This gives a lower bound $ \frac{t-1}{r}\binom{n}{r-1}-O_{r,t}(n^{r-2})$ for all $n$.

A weaker upper bound
\begin{equation}\label{tb}
 \ex_r(n,T)\leq {(e(T)-1)}\binom{n}{r-1}\quad\qquad\mbox{\em for each tight $r$-tree $T$}
\end{equation}
is implicit in several earlier works, and is explicit in~\cite{FJ-tree} (see Proposition 5.4 there).

To prove Conjecture~\ref{kalai}, we need to improve {   the bound in}~\eqref{tb}
 by a factor of $r$.
This turns out to be  difficult even for very special cases of tight trees.
It is only recently that the  authors~\cite{FJMKV}  were able to  improve~\eqref{tb}
in the case $T$ is the tight $r$-uniform path  with $t$ edges
 by a factor of $1-1/r$.
(For short paths, $t< (3/4)r$, Patk\'os~\cite{Patkos} proved better coefficients).

So far, the only family of tight trees for which Kalai's conjecture is verified  is the family of so-called {\it star-shaped} trees.
A tight $r$-tree $T$ is {\it star-shaped} if it contains an edge $e_0$ such that $ |e\cap e_0|=r-1$ for each $ e\in T\setminus \{e_0\} \,$.

\begin{thm} [% Frankl-F\"uredi~
\cite{FF}] \label{FF}
Let $n,r,t\geq 2$ be integers. Let $G$ be an $n$-vertex $r$-graph with $e(G)>\frac{t-1}{r}\binom{n}{r-1}$. Then $G$ contains
every star-shaped tight $r$-tree with $t$ edges.
\end{thm}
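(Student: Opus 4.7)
The plan is to reduce the problem to finding a single edge of $G$ with large codegrees on all of its $(r-1)$-subsets, and then to embed the star greedily. Call an edge $e_0 = \{x_1,\dots,x_r\} \in E(G)$ \emph{good} if every $(r-1)$-subset $S_j = e_0\setminus\{x_j\}$ satisfies $d_G(S_j) \ge t$, where $d_G(S) = |\{v : S\cup\{v\}\in E(G)\}|$. I claim that a good edge suffices to embed every star-shaped $r$-tree $T$ with $t$ edges at $e_0$: writing $T$ as $e_0$ together with a leaf pattern $(a_1,\dots,a_r)$ with $\sum_j a_j = t-1$, where $a_j$ counts leaves glued onto $S_j$, one picks the $t-1$ leaves one at a time in any order, noting that at each step the available set $A_j = \{v \notin e_0 : S_j \cup \{v\} \in E(G)\}$ has size at least $t-1$ while at most $t-2$ vertices have already been used, leaving at least one free choice.

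The second step, producing a good edge, is the heart of the matter. My first attempt would be an iterative pruning: while there is an $(r-1)$-subset $S$ with $1\le d(S) \le t-1$, delete all edges through $S$; then any remaining edge is good. Charging each deletion to the triggering $(r-1)$-subset, and noting that each such subset can trigger at most $t-1$ times, yields $e(G)-e(G^\ast) \le (t-1)\binom{n}{r-1}$, which only recovers the weaker estimate in~\eqref{tb} and falls short of the hypothesis by a factor of $r$.

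The main obstacle is closing this factor of $r$ gap. The structural feature one must exploit is that each deleted edge simultaneously reduces the codegree of $r$ different $(r-1)$-subsets, not only the triggering one. I would try tracking a potential function such as $\Phi(G) = \sum_S \min(d_G(S), t-1)$, whose drop per deletion accounts for \emph{all} $(r-1)$-subsets of the deleted edge that currently lie below the threshold $t$, and then argue that under the hypothesis $e(G)>\frac{t-1}{r}\binom{n}{r-1}$ the pruning must terminate with $G^\ast$ nonempty. A second, independent route would be induction on $t$: the base $t=2$ is immediate, since $\sum_S d_G(S) > \binom{n}{r-1}$ forces some $(r-1)$-set of codegree at least $2$; in the inductive step, one removes a leaf of $T$ to form a smaller star $T'$, applies induction to find a copy of $T'$, and extends it by one more leaf. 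The difficulty in the induction is essentially the same obstacle, rephrased as needing to choose the embedding of $T'$ so that the prescribed attachment $(r-1)$-subset has codegree at least $t$ in $G$ after accounting for the $t-1$ vertices already used by $T'$; a counting argument averaging over all $T'$-embeddings may be required.

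As a sanity check for the threshold, the extremal pattern $(t-1,0,\dots,0)$, in which all leaves hang off a single $(r-1)$-subset, requires only one $(r-1)$-set of codegree at least $t$, which is immediate from $\sum_S d_G(S) = r\,e(G) > (t-1)\binom{n}{r-1}$. This single case already matches the claimed bound exactly, confirming that the coefficient $\frac{t-1}{r}$ is the correct one and suggesting that a successful proof must upgrade this averaging argument to an embedding that works uniformly across every leaf pattern.
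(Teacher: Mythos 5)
The sufficiency half of your argument is fine: if some edge $e_0$ has all $r$ codegrees at least $t$, then a greedy leaf-by-leaf embedding of any leaf pattern summing to $t-1$ works, for exactly the reason you give. The problem is the existence half, and it is not just unfinished: the claim that the hypothesis $e(G)>\frac{t-1}{r}\binom{n}{r-1}$ forces a ``good'' edge with all codegrees $\ge t$ is \emph{false}. Take $r=2$, $t=3$, $n=5$, and let $G$ consist of a vertex $v_0$ joined to $u_1,u_2,u_3,u_4$ together with the two extra edges $u_1u_2$ and $u_3u_4$. Then $e(G)=6>5=\frac{t-1}{r}\binom{n}{r-1}$, yet every $u_i$ has degree $2$, so no edge of $G$ has both endpoints of degree at least $3$. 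The theorem's conclusion nonetheless holds here (the star $(2,0)$ sits at $v_0$, the double-star $(1,1)$ sits along $u_2u_1v_0u_3$), which shows your reduction is strictly stronger than what is true. Consequently neither the pruning, the potential function $\Phi(G)=\sum_S\min(d_G(S),t-1)$, nor the induction can succeed: on this $G$ the pruning deletes everything, because the drop in $\Phi$ per deleted edge can be just $1$ and there is nothing in the hypothesis to prevent that.

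What closes the factor-of-$r$ gap is a \emph{graded} codegree bound rather than a uniform one, found by the weighted rather than the unweighted average; this is the Frankl--F\"uredi argument, and the paper reruns it inside the proof of Theorem~\ref{main}. Let $w(D)=1/d_G(D)$ for $D\in\partial(G)$ and $w(e)=\sum_{D\in\binom{e}{r-1}}w(D)$. By Proposition~\ref{weight}, $\sum_{e\in E(G)}w(e)=|\partial(G)|\le\binom{n}{r-1}<\frac{r}{t-1}\,e(G)$, so some edge $e=\{x_1,\dots,x_r\}$ has $w(e)<\frac{r}{t-1}$. Order the $(r-1)$-subsets $S_i=e\setminus\{x_i\}$ so that $d_1\le\cdots\le d_r$ where $d_i=d_G(S_i)$; Proposition~\ref{arrange} then gives $d_i>i(t-1)/r$, hence $d_i\ge\lfloor i(t-1)/r\rfloor+1$ for every $i$. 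Now sort the leaf pattern $a_1\le\cdots\le a_r$ (with $\sum_j a_j=t-1$), map the center edge of $T$ to $e$ so that the part receiving $a_i$ leaves goes to $S_i$, and embed the leaves processing $S_1,S_2,\dots,S_r$ in this order. At step $i$ you have used $a_1+\cdots+a_{i-1}$ outside vertices and need $a_i$ new ones in $N(S_i)\setminus e$; there are $d_i-1\ge\lfloor i(t-1)/r\rfloor\ge a_1+\cdots+a_i$ of them available, the last inequality because the $i$ smallest of $r$ nonnegative integers summing to $t-1$ total at most $i(t-1)/r$. Your sanity check for the pattern $(t-1,0,\dots,0)$ is exactly the case $i=r$ of this chain, and the intermediate cases $1\le i<r$ are precisely what the graded bound (as opposed to a uniform $d_i\ge t$) makes available.
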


Given a tight $r$-tree $T$ and a tight subtree $T'$ of $T$, we say that $T'$ is a {\it trunk} of $T$ if
there exists an edge-ordering of $T$ satisfying \eqref{tree-definition} such that the edges of
$T'$ are listed first and
for each $e\in E(T)\setminus E(T')$ there exists $e'\in E(T')$ such that $|e\cap e'|=r-1$.
Let $c(T)$ be the minimum number of edges in a trunk of $T$.
Hence, a star-shaped tight tree is a tight tree $T$ with $c(T)=1$, and Theorem~\ref{FF} says that Kalai's Conjecture
holds for tight $r$-trees $T$ with $c(T)=1$. Note from the definition above that for a tight tree $T$ having $c(T)\leq c$
is equivalent to saying that all but at most $c$ edges of $T$ contain a vertex of degree $1$.

{%  
The primary goal of this paper is to extend  
Theorem~\ref{FF} to tight trees of bounded trunk size.
Our main theorem says that for every fixed integers $r\geq 2$ and $c\geq 1$, Kalai's Conjecture holds asymptotically
in $e(T)$ for tight $r$-trees $T$ with $c(T)\leq c$.} 

%is the following, which substantially generalizes Theorem~\ref{FF}.

\begin{thm} \label{main}
Let $n,r,t,c$ be positive integers, where $n\geq r\geq 2$ and $t\geq c\geq 1$.
Let $a(r,c)=(r^r+1-\frac{1}{r})(c-1)$.
Let $T$ be a tight $r$-tree with $t$ edges and $c(T)\leq c$. % that has trunk $T'$ size at most $c$.
Then
\begin{equation}\label{eq:main}
\ex_r(n,T)\leq  \left(\frac{t-1}{r}+ a(r,c)\right)\binom{n}{r-1}.
   \end{equation}
%If $G$ is an $n$-vertex $r$-graph that does not contain $T$, then
%\[e(G)\leq \left(\frac{t-1}{r}+a\right)|\partial (G)|.\]
%In particular,
%$e(G)\leq   (\frac{t-1}{r}+(r^r+1-\frac{1}{r}) c)\binom{n}{r-1}$.
\end{thm}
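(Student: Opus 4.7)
The plan is a two-stage attack: first reduce $G$ to a subhypergraph $G^*$ of large minimum $(r-1)$-codegree on its shadow by a greedy shaving procedure, and then embed $T$ in $G^*$ by first placing the trunk $T'$ and then extending by the $t-c$ leaves of $T-T'$. Set $K := a(r,c)$, and repeatedly delete every edge containing an $(r-1)$-set whose codegree in the current hypergraph is at most $K$, until no such set remains. The resulting $G^* \subseteq G$ has the property that every $(r-1)$-set of $\partial G^*$ has codegree at least $K+1$ in $G^*$. Since each shaving step removes at most $K$ edges and the number of steps is at most $|\partial G| \le \binom{n}{r-1}$, we have
\[
e(G^*) \ge e(G) - K\binom{n}{r-1} > \frac{t-1}{r}\binom{n}{r-1},
\]
so $G^*$ satisfies the hypothesis of Theorem~\ref{FF}.

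Since $T'$ has only $c$ edges and thus $r+c-1$ vertices, and every $(r-1)$-set of $\partial G^*$ has codegree at least $K+1 \gg r+c$ in $G^*$, a greedy edge-by-edge embedding places $T'$ inside $G^*$. The core of the argument is then to extend the trunk by the $t-c$ leaves: each leaf is an $(r-1)$-subset $S$ of some embedded trunk edge together with one new vertex, so for the leaves to be placed at distinct fresh vertices we need each anchor $S$ to satisfy $d_G(S) \ge r+t-1$. The uniform codegree guarantee $K+1 = a(r,c)+1$ is independent of $t$ and so is not enough on its own; instead we exploit flexibility in the choice of trunk embedding. By a supersaturation-style averaging over all embeddings of $T'$ in $G^*$ (of which there are many, since $e(G^*)$ is large), I would argue that at least one embedding has the property that all of its at most $rc$ potential anchor $(r-1)$-sets have codegree at least $r+t-1$ in $G$. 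The "bad" embeddings—those using some anchor at a low-codegree $(r-1)$-set—are bounded by an edge-by-edge count in which each trunk edge has up to $r$ anchor subsets and at most $c-1$ choices of previous trunk edge to attach to, and this is where the factor $(r^r+1-\frac{1}{r})(c-1)$ in $a(r,c)$ enters.

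Once a trunk embedding with all anchors of codegree $\ge r+t-1$ in $G$ is fixed, the $t-c$ leaves can be placed greedily one at a time, each time choosing a fresh vertex in the link of the appropriate anchor; the codegree $\ge r+t-1$ guarantees that such a fresh vertex exists because at most $r+t-2$ vertices have been used at any stage. The main obstacle I foresee is the averaging step above: one must count embeddings of $T'$ weighted by a codegree profile sharply enough that the total contribution of low-codegree anchors is absorbed by the constant $a(r,c)$ per $(r-1)$-set, and in particular does not grow with $t$. This requires using the tight-tree construction of $T'$ (each new trunk edge is determined by a previously placed edge, one of its $r$ $(r-1)$-subsets, and a new vertex) to iterate the count over the $c-1$ non-initial trunk edges without accumulating any $t$-dependent loss.
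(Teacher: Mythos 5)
Your reduction to a subhypergraph $G^*$ with $\delta_{r-1}(G^*)\geq a(r,c)+1$ is fine (and is essentially Proposition~\ref{min-codegree}), but the second stage has a genuine gap. You correctly observe that the uniform codegree bound $a(r,c)+1$ is much smaller than $t+r-1$, and you propose to fix this by averaging over embeddings of $T'$ to find one in which every anchor $(r-1)$-set has codegree $\geq t+r-1$. Such an embedding need not exist: in a near-extremal $T$-free $r$-graph coming from a partial Steiner system, essentially every $(r-1)$-set has codegree close to $(t-1)/r$, which is far below $t+r-1$, so there is no ``lucky'' embedding of $T'$ of the kind you want no matter how you count. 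The obstruction is not that bad embeddings are plentiful but can be absorbed into $a(r,c)$; it is that good embeddings (in your uniform sense) may simply be absent. Any version of your plan that insists on a codegree threshold depending on $t$ at every anchor will force the additive error to grow with $t$, defeating the point of the theorem.

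The paper's proof avoids this by never demanding a uniform codegree bound. Using the default weight $w(e)=\sum_{D\in\binom{e}{r-1}}1/d_G(D)$ and Proposition~\ref{weight}, it finds many edges $e$ with $w(e)<1/\gamma$ where $\gamma\approx t/r$; after passing to an $r$-partite subfamily $L_2^*$ whose edges all share the same codegree ordering pattern and have $\delta_{r-1}(L_2^*)\geq c$, Proposition~\ref{arrange} gives the \emph{graded} bound $d_G(e\setminus A_i)>i\gamma$ for $i=1,\dots,r$. The trunk $T'$ is embedded color-preservingly (Proposition~\ref{colored-mapping}) so that leaves of $T-T'$ split by attachment direction into classes $E_1,\dots,E_r$, permuted so $|E_1|\leq\cdots\leq|E_r|$ and hence $|E_1|+\cdots+|E_i|\leq\lfloor i(t-c)/r\rfloor$; the greedy extension then routes $|E_i|$ leaves through anchors of codegree $>i\gamma$, so the heavier load is matched to the larger codegree. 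This load-balancing between the sorted anchor codegrees and the sorted attachment multiplicities is the mechanism that makes the error term $O_{r,c}(1)$; it has no analogue in your uniform-threshold plan, and I don't see how to rescue the averaging step without introducing it.
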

Note that Theorem \ref{FF} follows from Theorem \ref{main} by setting $c=1$.
{   The} main point of Theorem \ref{main} is that the coefficient in front of $\binom{n}{r-1}$
is $(t-1)/r +O_{r,c}(1)$, while the coefficient in Kalai's conjecture is $(t-1)/r$.

We also give a (simple) proof of the fact that Kalai's Conjecture holds for tight $r$-trees with at most four edges.
\begin{thm} \label{4e}
Let $n\geq r\geq 2$ be  integers and $T$ be a tight $r$-tree with $t\leq 4$ edges.
%that has a trunk $\{e_1,e_2\}$ of size $2$.
%Let $G$ be an $n$-vertex $3$-graph that does not
%contain $T$. Then $e(G)\leq \frac{t-1}{3}|\partial(G)|$. In particular,
Then $$\ex_r(n,T)\leq \frac{t-1}{r} \binom{n}{r-1}.$$
\end{thm}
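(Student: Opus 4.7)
The plan is to prove Theorem~\ref{4e} by combining Theorem~\ref{FF} with a small induction on $r$. For $t = 1$ the bound is trivial. For $t = 2$ the unique tight $r$-tree (two edges sharing $r-1$ vertices) is star-shaped, so Theorem~\ref{FF} applies. For $t = 3$, fix a tight ordering $e_1, e_2, e_3$: then $|e_1 \cap e_2| = r-1$ and $e_3$ shares $r-1$ vertices with one of $e_1$, $e_2$, making that edge a centre; hence $T$ is star-shaped and Theorem~\ref{FF} applies.

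For $t = 4$ I would first dispose of star-shaped $T$ (i.e.\ $c(T) = 1$) via Theorem~\ref{FF}. Otherwise $c(T) \ge 2$, and a short case analysis on how $e_3$ and $e_4$ attach to the trunk $\{e_1, e_2\}$ shows that every non-star-shaped 4-edge tight $r$-tree either (a) contains a vertex $v$ common to all four edges, or (b) is the tight $r$-path of length $4$ (and for $r \ge 4$ case (b) is itself a sub-case of (a), since any vertex $v_j$ with $4 \le j \le r$ lies in all four edges of the path). In case (a), removing $v$ from every edge of $T$ gives an $(r-1)$-uniform tight $4$-tree $T'$; since $\sum_{u \in V(G)} |L_u| = r \cdot e(G) > 3 \binom{n}{r-1} = \frac{3n}{r-1}\binom{n-1}{r-2}$, some vertex $v^* \in V(G)$ satisfies $|L_{v^*}| > \frac{3}{r-1}\binom{n-1}{r-2}$, and the induction hypothesis applied to the $(r-1)$-graph $L_{v^*}$ with the $(r-1)$-tree $T'$ produces $T' \subseteq L_{v^*}$, so that adjoining $v^*$ to each edge yields $T \subseteq G$. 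The base case $r = 2$ of the induction is the Erd\H{o}s--S\'os conjecture for $4$-edge graph trees, which is classical (and sharp for paths by Erd\H{o}s--Gallai).

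The main obstacle is case (b) when $r = 3$: the tight $3$-path of length $4$ has no vertex common to all four edges, so the link reduction fails, and I must directly show that every $3$-graph $G$ with $e(G) > \binom{n}{2}$ contains a tight $4$-path. My approach is to pick a pair of large shadow-degree in $G$, attempt to grow a tight path outward by swapping one vertex at a time at either end, and control the possibility of collisions by a double-counting over ordered triples of overlapping shadow pairs. As noted in the abstract, this step is essentially Katona's intersection shadow theorem, so I expect the same counting that underlies Katona's classical proof to deliver the required $\binom{n}{2}$ bound; this case is the technical heart of the proof and the only step where a hypergraph tool beyond Theorem~\ref{FF} is needed.
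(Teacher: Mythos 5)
Your decomposition is close to the paper's. The paper proves a shadow version (Theorem~\ref{4e}$'$) by induction on $r$: the base case $r=2$ is handled directly, and the inductive step splits according to whether $T$ has a vertex in all edges (link reduction, as you propose) or not, the latter case being resolved by Lemma~\ref{P34} for $T=P_4^3$. Your variant -- first disposing of star-shaped $T$ via Theorem~\ref{FF} before the case split -- is a good move: note that the star-shaped tree $\{\{a,b,c\},\{a,b,x\},\{a,c,y\},\{b,c,z\}\}$ is a $4$-edge tight $3$-tree with empty common intersection that is \emph{not} $P_4^3$, so the paper's Case~2 assertion ("this is possible only if $\ldots T=P_4^3$") is not literally complete as stated; your invocation of Theorem~\ref{FF} covers that tree cleanly. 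Otherwise your link-reduction computation is correct (and, since Proposition~\ref{tight-tree} gives uniqueness, note also that the paper does all this with the shadow quantity $|\partial(G)|$ in place of $\binom{n}{r-1}$, which is a slightly stronger statement but does not change the shape of the argument).

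The genuine gap is the $P_4^3$ case, which is the entire substance of the theorem once the reductions are done, and you have not given a proof -- only a sketch of a hoped-for counting. Moreover, the expectation that "the same counting that underlies Katona's classical proof" will deliver the bound is backwards: you need to show that every $P_4^3$-free $3$-graph $G$ satisfies $e(G)\le\binom{n}{2}$ (in fact $e(G)\le|\partial(G)|$), which is \emph{strictly stronger} than Katona's intersection shadow theorem, since intersecting implies $P_4^3$-free but not conversely. Katona's theorem is a corollary of the result you need, not a black box that implies it. The paper's Lemma~\ref{P34} gives a genuinely different argument: take a minimal counterexample $G$ with $e(G)>|\partial(G)|$, deduce $\delta_2(G)\ge 2$, locate an edge $abc$ with small default weight $w(abc)<1$ (hence $d(ac)\ge 3$, $d(bc)\ge 4$), and then run a detailed case analysis showing that the high codegrees force a tight $4$-path or else let you delete three edges and only two shadow pairs, contradicting minimality. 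That page of structural analysis is the technical heart of the proof and cannot be replaced by a generic "grow outward and double-count" plan without specifying the invariant being counted and why it bounds $e(G)$ by $\binom{n}{2}$; as it stands your proposal does not prove the theorem.
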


The proofs of Theorems~\ref{main} and~\ref{4e} are postponed to Sections~\ref{sec:main} and~\ref{sec:4e}.

%%%%%%%%%%%%%%%%%%%%%%%%%%%%%%%%%%%%%%%%%%%%%%%%%%%%%%%

\section{Tight trees and shadows}

An important notion in extremal set theory is that of {\em shadow}. % and {\em weights}.
Given an $r$-graph $G$,  the {\it shadow} of $G$ is
\[\partial(G)=\{S: |S|=r-1,\quad\mbox{\em and} \quad S\subseteq e \quad\mbox{\em for some}\quad   e\in e(G)\}.\]

%Theorem~\ref{FF} of
% Frankl and F\"uredi and our Theorems~\ref{main}--\ref{4e} have stronger versions in terms of shadows.
%We state and discuss them in the next section.

The result % of  Frankl and F\"uredi
 in~\cite{FF} is more explicit than Theorem~\ref{FF}.
%  They
It was shown that if $T$ is any star-shaped tight $r$-tree with $t$ edges
and $G$ is a $T$-free $r$-graph then $e(G)\leq \frac{t-1}{r}|\partial(G)|$, from which Theorem~\ref{FF} immediately follows.
There were several other results in the literature that
bound the size of an $H$-free $r$-graph in terms of the size of its shadow. Katona~\cite{Katona} showed that if $G$ is an intersecting
$r$-graph then $e(G)\leq |\partial(G)|$. This is  known as the {\em intersection shadow theorem}.
 More recently, Frankl~\cite{Frankl} showed that if $G$ is an $r$-graph that does not contain
a matching of size $s+1$ then $e(G)\leq s|\partial(G)|$. Sometimes it is easier prove the bounds in terms of the shadow size than in terms of $n$ using induction. Instead of Theorems~\ref{main}--\ref{4e} we will prove  bounds on $e(G)$ in terms of $|\partial(G)|$, from which Theorems~\ref{main}--\ref{4e} will follow.

\medskip
Based on our results, we propose the following conjecture, which we will show is equivalent to Kalai's conjecture.

%It is natural to ask the following strengthening of Kalai's conjecture.

\begin{conj} \label{main-question}
Let $r\geq 2,t\geq 1$ be integers. Let $T$ be a tight $r$-tree with $t$ edges. If $G$ is an $r$-graph that
does not contain $T$ then $e(G)\leq \frac{t-1}{r}|\partial(G)|$.
\end{conj}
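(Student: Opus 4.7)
The plan is to attempt an inductive proof on $t=e(T)$, building on the argument of Frankl--F\"uredi~\cite{FF} for the star-shaped case, while acknowledging up front that the general case faces a genuine new obstacle (consistent with the paper's promise that Conjecture~\ref{main-question} is equivalent to Kalai's still-open conjecture). The base $t=1$ is vacuous, since $T$-freeness forces $e(G)=0$. For $t=2$ a tight $r$-tree is a pair of edges sharing an $(r-1)$-set, so $T$-freeness forces codegree at most $1$, and
$$r\,e(G)=\sum_{S\in\partial(G)}d_G(S)\le |\partial(G)|,$$
which is the desired bound.

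For the inductive step, fix an edge ordering $e_1,\dots,e_t$ of $E(T)$ witnessing \eqref{tree-definition}. Let $v_t$ be the new vertex of $e_t$, set $f:=e_t-v_t$ so that $f\subset e_s$ for some $s<t$, and let $T^-:=T-v_t$, a tight $r$-tree with $t-1$ edges. Suppose $G$ is $T$-free and, for contradiction, $r\,e(G)>(t-1)|\partial(G)|$. My plan has two main steps. \emph{Step 1 (cleanup):} iteratively delete every edge of $G$ that contains an $(r-1)$-set of codegree at most $t-1$, producing a subhypergraph $G_0\subseteq G$ in which every $S\in\partial(G_0)$ has $d_{G_0}(S)\ge t$, while preserving $r\,e(G_0)>(t-1)|\partial(G_0)|$. \emph{Step 2 (greedy embedding):} process $e_1,\dots,e_t$ in order. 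At step $i$, the $(r-1)$-set $e_i-v_i$ is a subset of some already-embedded edge $e_{s(i)}$; its image $f_i\in \partial(G_0)$ satisfies $d_{G_0}(f_i)\ge t$, and since at most $(r-1)+(i-2)\le t-1$ vertices of $G_0$ have already been used, at least one codegree-partner of $f_i$ lies outside the used set and can be chosen as the image of $v_i$. This completes a copy of $T$ in $G_0\subseteq G$, contradicting $T$-freeness.

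The main obstacle, and the reason this remains only a conjecture, is Step~1. Deleting a single shadow set $S$ of codegree $m\le t-1$ removes $m$ edges of $G$ but is only guaranteed to reduce $|\partial|$ by $1$ (if the other $(r-1)$-subsets of those edges retain positive codegree elsewhere), so the ratio $r\,e/|\partial|$ can decrease sharply and may fall below $t-1$ before minimum codegree $t$ is achieved. To push this argument through in general one would need a substantially more delicate, weighted removal---for instance, distributing each deleted edge's contribution fractionally across its $r$ shadow sets and running a discharging argument, or first isolating a dense sub-$r$-graph in which the ratio exceeds $(t-1)/r$ by a controlled slack. Making such a cleanup preserve the target ratio in full generality is precisely the technical barrier that also obstructs Kalai's conjecture, and it is where a complete proof of Conjecture~\ref{main-question} must do its real work; the weaker bounds in Theorems~\ref{main} and~\ref{4e} represent what one can currently achieve by such methods under extra structural hypotheses on $T$.
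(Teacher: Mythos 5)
The statement you were asked to prove is Conjecture~\ref{main-question}, which the paper itself does not prove: via Lemma~\ref{lem:equi} and Theorem~\ref{th:equi} it is shown to be \emph{equivalent} to Kalai's Conjecture~\ref{kalai}, which is open, and the paper's actual theorems establish only the weaker bound \eqref{eq:main2} for trees of bounded trunk size and the case $t\le 4$. Your proposal, to its credit, does not claim to close this. Your Step~2 is fine as far as it goes: if $\delta_{r-1}(G_0)\ge t$, then at step $i$ only $i-1\le t-1$ already-used vertices lie outside the image of $e_i-v_i$, so the greedy embedding succeeds. But Step~1 is not merely ``delicate''; it is unobtainable from the stated hypothesis, and your diagnosis of why is slightly off. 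The cleanup \emph{does} preserve the ratio: the paper's Proposition~\ref{min-codegree} (whose proof works verbatim for non-integer $q$) produces $G_0\subseteq G$ with $e(G_0)>\frac{t-1}{r}|\partial(G_0)|$ in which every shadow codegree exceeds $\frac{t-1}{r}$. The real problem is that this guarantees codegree only about $t/r$, a factor of $r$ short of the codegree $t$ that your greedy step needs; conversely, forcing codegree $\ge t$ by such deletions would require the hypothesis $e(G)>(t-1)|\partial(G)|$, which is exactly the trivial bound \eqref{tb}. Winning that factor of $r$ is precisely the content of the conjecture, so no amount of tinkering with the removal order or a fractional discharging of deleted edges over their $r$ shadow sets can make Step~1 work in the generality you need.

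For comparison, the paper's partial results recover the factor $r$ (up to an additive constant) only under structural hypotheses, and by a mechanism genuinely different from uniform codegree cleanup. In the proof of Theorem~\ref{main}$'$ one keeps only the many edges of small default weight $w(e)<1/\gamma$, passes to an $r$-partite subgraph whose edges all have the same ``pattern'' of shadow codegrees, and uses Proposition~\ref{arrange} to obtain the \emph{graded} bounds $d_G(e\setminus A_i)>i\gamma$; the trunk $T'$ (with $c$ edges) is embedded by the color-preserving Proposition~\ref{colored-mapping} using only codegree $\ge c$, and the remaining $t-c$ edges are attached greedily in $r$ batches $E_1,\dots,E_r$, where the ordering $|E_1|\le\dots\le|E_r|$ yields $|E_1|+\dots+|E_i|\le\lfloor i(t-c)/r\rfloor\le\lfloor i\gamma\rfloor$, exactly matching the graded codegree bounds. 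That is where the factor of $r$ is won, and it is also why the argument needs $c(T)\le c$ bounded. If you want to make progress on Conjecture~\ref{main-question} itself, the place to work is on relaxing the bounded-trunk hypothesis in that scheme (or on the small cases, as in Theorem~\ref{4e}$'$ and Lemma~\ref{P34}), not on strengthening the cleanup step.
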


The lower bound constructions obtained from designs mentioned earlier show that the bound in Conjecture~\ref{main-question}, if true, would be tight.
Since for every $r$-graph $G$ on $n$ vertices one has $|\partial (G)|\leq {n \choose r-1}$ Conjecture~\ref{main-question}
obviously implies Conjecture~\ref{kalai}. {   We will show} in Theorem~\ref{th:equi} 
{   that} Conjecture~\ref{kalai} also implies Conjecture~\ref{main-question}.

\begin{prop}~\label{prop:equi}
Conjecture~\ref{main-question} is equivalent to Kalai's conjecture.
  \end{prop}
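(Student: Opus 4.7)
The easy direction, Conjecture~\ref{main-question} $\Rightarrow$ Conjecture~\ref{kalai}, is immediate from the trivial bound $|\partial(G)|\le\binom{n}{r-1}$ valid for every $n$-vertex $r$-graph $G$.

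For the reverse direction, assume Conjecture~\ref{kalai} holds for a fixed tight $r$-tree $T$ with $t$ edges. Given a $T$-free $r$-graph $G$ with $e=e(G)$ edges and shadow of size $m=|\partial(G)|$, I want to show $e\le\frac{t-1}{r}m$. The plan is to embed $G$ as a subgraph of a larger $T$-free $r$-graph $\tilde G$ with the two properties: (i) $|\partial(\tilde G)|=\binom{|V(\tilde G)|}{r-1}$, and (ii) $e(\tilde G\setminus G)=\frac{t-1}{r}(|\partial(\tilde G)|-m)$. Given (i) and (ii), Kalai's conjecture applied to $\tilde G$ reads
$e(\tilde G)\le\frac{t-1}{r}\binom{|V(\tilde G)|}{r-1}=\frac{t-1}{r}|\partial(\tilde G)|$;
substituting the decomposition $e(\tilde G)=e+e(\tilde G\setminus G)$ together with (ii) and simplifying yields $e\le\frac{t-1}{r}m$, as desired.

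To build $\tilde G$, I would invoke Keevash's theorem on designs (or R\"odl's partial Steiner systems in an asymptotic form), which produces, for every sufficiently large $N$ with suitable divisibility, a $T$-free $r$-graph $H_N$ on $N$ vertices with exactly $\frac{t-1}{r}\binom{N}{r-1}$ edges and complete shadow $\binom{V(H_N)}{r-1}$. Take $V(\tilde G)=V(G)\sqcup V(H_N)$ and include $G$ and $H_N$ as edge-disjoint pieces on the two parts; since $T$ is connected, every copy of $T$ must lie inside one $T$-free piece, so this disjoint union is already $T$-free. To satisfy~(i), I cover the remaining $(r-1)$-subsets of $V(\tilde G)$---the ``cross'' subsets hitting both $V(G)$ and $V(H_N)$, and the internal subsets of $V(G)$ missing from $\partial(G)$---with carefully chosen extra edges routed through vertices of $V(H_N)$, selected by a greedy or probabilistic argument that exploits $N\gg n$ so that each extra edge is ``tight-attachment isolated'' from every other edge of $\tilde G$. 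Combined with the connectedness of $T$, this ensures that no new copy of $T$ is created.

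The main technical obstacle is achieving (i), (ii), and $T$-freeness simultaneously. Because the number of missing cross $(r-1)$-subsets is of order $nN^{r-2}$ while $H_N$ alone contributes $\Theta(N^{r-1})$ edges at the exact ratio $\frac{t-1}{r}$, the edge-to-shadow ratio of the extension $\tilde G\setminus G$ is asymptotically driven to $\frac{t-1}{r}$ by the Keevash bulk, so~(ii) can be secured up to an additive error vanishing with $1/N$. Passing the limit $N\to\infty$ through Kalai's inequality on $\tilde G$ then gives the clean inequality $e\le\frac{t-1}{r}m$ on the integer-valued quantities $e$ and $m$.
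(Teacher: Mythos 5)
Your easy direction (Conjecture~\ref{main-question} $\Rightarrow$ Conjecture~\ref{kalai}) is exactly as in the paper. Your hard direction takes a genuinely different route: you try to \emph{extend} the fixed $T$-free $r$-graph $G$ to a $T$-free $\tilde G$ on a much larger vertex set with complete shadow and with the added edges hitting the exact ratio $\frac{t-1}{r}$, and then to invoke Kalai's conjecture on $\tilde G$. The paper instead \emph{packs} many shadow-disjoint copies of $G$ onto $[N]$ (Lemma~\ref{lem:equi}), using the Frankl--F\"uredi/Frankl--R\"odl packing result~\eqref{eq:FF64} applied to $H=\partial(G)$; the union $F$ is automatically $T$-free because tight trees have connected $(r-1)$-intersection graphs and the copies of $G$ share no $(r-1)$-shadow, and $e(F)=(1-o(1))\frac{e(G)}{|\partial(G)|}\binom{N}{r-1}$ gives the contradiction with Kalai's bound directly. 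The packing approach buys you $T$-freeness for free and sidesteps the entire issue of covering the complement of $\partial(G)$.

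There is a concrete gap in your construction. You propose to cover the missing cross $(r-1)$-subsets with extra edges that are ``tight-attachment isolated,'' i.e.\ no extra edge shares $r-1$ vertices with any other edge of $\tilde G$. But then each extra edge contributes exactly $1$ to the edge count and exactly $r$ fresh elements to the shadow, so the extra edges alone come in at ratio $1/r$, not $(t-1)/r$. Consequently, for $t\ge 3$ the quantity $\frac{t-1}{r}\bigl(|\partial(\tilde G)|-m\bigr)-e(\tilde G\setminus G)$ equals $\frac{t-2}{r}\bigl(\binom{n+N}{r-1}-\binom{N}{r-1}-m\bigr)=\Theta(nN^{r-2})$, which diverges as $N\to\infty$ rather than vanishing as you claim. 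The ratio of $\tilde G\setminus G$ to its shadow is indeed dominated by the Keevash bulk, but condition~(ii) requires an \emph{additive} equality, and the additive deficit from the cross edges is huge, swamping the fixed margin $e-\frac{t-1}{r}m>0$ you need to preserve. To fix this you would have to cover each missing $(r-1)$-subset about $t-1$ times, which forces the extra edges to overlap heavily in codimension one---precisely the configuration that threatens to create a copy of $T$, possibly together with edges of $G$ or $H_N$. That is the real technical difficulty and your sketch does not address it; isolation and the required ratio are mutually exclusive for $t\ge 3$.
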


\begin{thm}\label{th:equi}
If $T$ is a tight tree then the limit
$$
    \alpha(T) :=  \lim_{n\to \infty} \ex_r(n, T)/{n \choose r-1}
  $$
exists and is equal to its supremum. Moreover,
$$
    \alpha(T)=\sup  \left\{  \frac{ e(G) }{ |\partial (G)|} :  G  \text{ is  a $T$-free $r$-graph}\right\} .
  $$
In particular for $\alpha:=\alpha (T)$  we have $\ex(n, T)\leq \alpha {n \choose r-1}$ and
$e(G)\leq \alpha |\partial (G)|$ for every $n$ and for every $T$-free $r$-graph $G$.
  \end{thm}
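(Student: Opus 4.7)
The plan is to set
$$
 \gamma \;:=\; \sup\{\,e(G)/|\partial(G)|:\ G\text{ is a }T\text{-free }r\text{-graph}\,\}
$$
and reduce the theorem to three facts: (i) $\gamma<\infty$; (ii) $q(n):=\ex_r(n,T)/\binom{n}{r-1}\le \gamma$ for every $n$; and (iii) $\liminf_{n\to\infty} q(n)\ge \gamma$. Once these are in hand, $\limsup_n q(n)\le \sup_n q(n)\le \gamma\le \liminf_n q(n)$, so all four quantities coincide, the limit exists, and $\alpha(T)=\gamma=\sup_n q(n)$; the two ``in particular'' inequalities then follow at once from the definition of $\gamma$.

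For part (i) I would use a greedy-embedding argument. Fix a tight ordering $(e_1,\ldots,e_t)$ of $T$. I claim every $T$-free $G$ has some shadow $S\in \partial(G)$ with $d_G(S)\le r+t-2$: if every shadow had at least $r+t-1$ edges through it, then one could embed $T$ into $G$ one edge at a time, since after placing $f_1,\ldots,f_{i-1}$ (using at most $r+i-2$ vertices of $G$) the required shadow $S_i\subseteq f_{s_i}$ has at least $r+t-1$ extensions in $G$, and at most $t-1$ of them can reuse an old vertex, leaving a fresh one to play the role of $v_i$. Deleting all edges through such a low-degree shadow removes at most $r+t-2$ edges, and iterating at most $|\partial(G)|$ rounds clears $E(G)$; so $e(G)\le (r+t-2)|\partial(G)|$, giving $\gamma\le r+t-2<\infty$. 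Part (ii) is immediate: for an extremal $T$-free $n$-vertex $G$, $\ex_r(n,T)=e(G)\le \gamma|\partial(G)|\le \gamma\binom{n}{r-1}$.

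The main step is (iii). Given $\epsilon>0$, choose a $T$-free $G_0$ on $m$ vertices with $e(G_0)/|\partial(G_0)|\ge \gamma-\epsilon/2$, and form the auxiliary $|\partial(G_0)|$-uniform hypergraph $\mathcal{H}_n$ with vertex set $\binom{[n]}{r-1}$ and one hyperedge $\phi(\partial G_0)$ for each injection $\phi:V(G_0)\hookrightarrow [n]$. By symmetry, $\mathcal{H}_n$ is $\Theta(n^{m-r+1})$-regular with maximum codegree $O(n^{m-r})$, so by the R\"odl nibble (Pippenger--Spencer) it contains a matching $\mathcal{M}$ of size $(1-o(1))\binom{n}{r-1}/|\partial(G_0)|$. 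For each $F\in \mathcal{M}$, pick a corresponding embedding $\phi_F$, place the copy $\phi_F(G_0)$, and let $G'$ be the union. Then
$$
 e(G')\;\ge\; |\mathcal{M}|\cdot e(G_0)\;\ge\;(1-o(1))\,\frac{e(G_0)}{|\partial(G_0)|}\binom{n}{r-1}\;\ge\;(\gamma-\epsilon)\binom{n}{r-1}
$$
for $n$ large. Since distinct $F,F'\in \mathcal{M}$ share no $(r-1)$-subset, the placements of $G_0$ are pairwise shadow-disjoint; hence in any tight $r$-tree $T'\subseteq G'$ consecutive edges $g_i,g_{s_i}$, which must share an $(r-1)$-subset, come from the same placement, and iterating forces all of $T'$ into a single copy of $G_0$, contradicting $T$-freeness of $G_0$. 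Thus $G'$ is $T$-free, proving (iii). The hardest point is verifying the regularity and codegree bounds needed to invoke the nibble, but both follow from the vertex-transitivity of $\mathcal{H}_n$ and a direct count.
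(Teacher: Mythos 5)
Your proof is correct and follows essentially the same route as the paper: you identify the same quantity $\gamma=\sup e(G)/|\partial(G)|$, show $\ex_r(n,T)/\binom{n}{r-1}\le\gamma$, and then match $\gamma$ from below by shadow-disjointly packing copies of a near-optimal $T$-free $G_0$ into $[n]$; the paper does this via the Frankl--F\"uredi induced-packing theorem (equation \eqref{eq:FF64}, an application of Frankl--R\"odl), while you re-derive the same packing directly from the Pippenger--Spencer nibble, and you re-prove finiteness of $\gamma$ by a greedy embedding rather than citing \eqref{tb}, but these are only cosmetic variations.
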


% This reveals what Kalai's construction is really about.

Let  $H$ be a $u$-uniform hypergraph on  $v$  vertices ($v\geq u\geq 1$).
An {\em almost disjoint induced packing} of $H$ of  size $m$ on $n$ vertices consists of $v$-subsets of  $[n]$, $V_1,\dots, V_m$, and $m$ copies of $H$ on these sets, $H_1, \dots ,H_m$, such that either $|V_i\cap V_j|<u$  or  $|V_i\cap V_j|=u$, but in the
latter case  $V_i\cap V_j$ is not an edge of any of the $H_k$'s.
So $V_k$ induces $H_k$ in the union $\cup E(H_i)$. Obviously, $m\leq {n\choose u}/ e(H)$.
For the proof  of Theorem~\ref{th:equi} we need a result from~\cite{FF64} about the existence of almost perfect induced packings of subhypergraphs with nearly disjoint vertex sets. We recall it in the form we need. Given $H$ as $n \to \infty$ one has
\begin{equation}\label{eq:FF64}
    \max \,  m = (1+o(1))\, {n\choose u}/ e(H).
  \end{equation}
In fact \eqref{eq:FF64} is an application of the packing result of Frankl and R\"odl~\cite{FR} .
% https://faculty.math.illinois.edu/~z-furedi/PUBS/furedi_frankl_induced-packings.pdf

\begin{lem} \label{lem:equi}
Let $T$ be a tight $r$-tree and suppose that $G$ is a $T$-free $r$-graph.
Then for every $\eps>0$, there exists $n_0=n_0(T,G,\eps)$ such that for all $n>n_0$
 $$\ex(n, T) > \left(\frac{e(G)}{|\partial(G)|} -\eps \right){n \choose r-1}. $$
\end{lem}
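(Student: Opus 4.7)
The plan is to construct, for each sufficiently large $n$, a $T$-free $r$-graph on $n$ vertices with more than $\left(e(G)/|\partial(G)| - \varepsilon\right)\binom{n}{r-1}$ edges by assembling many pairwise shadow-disjoint labeled copies of $G$ inside $[n]$.

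First I would apply the almost disjoint induced packing estimate \eqref{eq:FF64} not to $G$ itself but to the $(r-1)$-uniform hypergraph $H := \partial(G)$ on $v := |V(G)|$ vertices. For every $\eta > 0$ and all $n$ large enough, this yields $m \geq (1-\eta)\binom{n}{r-1}/|\partial(G)|$ labeled copies $H_1,\dots,H_m$ of $\partial(G)$ sitting on $v$-sets $V_1,\dots,V_m \subseteq [n]$, with the property that any two $V_i \neq V_j$ meet in fewer than $r-1$ vertices, or else meet in exactly one $(r-1)$-set that is an edge of none of the $H_k$. Now lift each $H_i$ to a copy $G_i$ of $G$ on $V_i$ via the bijection $V_i \to V(G)$ witnessing $H_i \cong \partial(G)$, and set $G^* := \bigcup_{i=1}^m G_i$. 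Because any two $V_i, V_j$ share at most $r-1$ vertices, the $G_i$ are edge-disjoint as $r$-graphs, so $e(G^*) = m\cdot e(G)$.

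The remaining step, and the main obstacle, is to verify that $G^*$ is $T$-free. Suppose for contradiction that $f_1,\dots,f_t$ is an ordering of a copy of $T$ in $G^*$ satisfying \eqref{tree-definition}. For each $i \geq 2$ there is some $s < i$ with $|f_i \cap f_s| \geq r-1$, so $f_i$ and $f_s$ share a common $(r-1)$-subset $S$. If $f_i \in G_k$ and $f_s \in G_{k'}$, then $S \in \partial(G_k) \cap \partial(G_{k'}) = H_k \cap H_{k'}$, and the induced packing property forces $k = k'$. Walking along the tight-tree ordering shows that all edges of the copy of $T$ lie in a single $G_k$, contradicting the $T$-freeness of $G$.

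Consequently $\ex_r(n,T) \geq e(G^*) \geq (1-\eta)\, e(G)\binom{n}{r-1}/|\partial(G)|$, and choosing $\eta < \varepsilon |\partial(G)|/e(G)$ yields the strict inequality for all $n$ past some threshold $n_0 = n_0(T,G,\varepsilon)$. The two conceptual ingredients are: applying \eqref{eq:FF64} at uniformity $u = r-1$ so that the packing parameter scales with $\binom{n}{r-1}$ rather than $\binom{n}{r}$, and the observation that tight trees propagate precisely through shared $(r-1)$-subsets, which is exactly the level of disjointness provided by the induced packing of $\partial(G)$.
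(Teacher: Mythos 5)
Your proposal is correct and follows essentially the same route as the paper: apply the Frankl--F\"uredi almost disjoint induced packing result (equation~\eqref{eq:FF64}) at uniformity $u=r-1$ to $H=\partial(G)$, lift each placed copy of $\partial(G)$ to a copy of $G$ on the same vertex set, take the union, and observe that a tight tree cannot cross between two lifted copies because consecutive edges in the defining ordering share an $(r-1)$-set while distinct blocks $V_i,V_j$ either share fewer than $r-1$ vertices or share an $(r-1)$-set that is deliberately excluded from every $H_k=\partial(G_k)$. The only difference is that you spell out the cross-block argument (and the concluding $\eta$-versus-$\varepsilon$ bookkeeping) in more detail than the paper's terse ``all other edges must also belong to $G_i$,'' which is a matter of exposition rather than of substance.
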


\begin{proof}[Proof of Lemma~\ref{lem:equi}]
To get a lower bound we need a construction $F$, a $T$-free $r$-graph on $n$ vertices.
Define $H=\partial (G)$ and apply~\eqref{eq:FF64} (with $u=r-1$) to obtain near optimal
number of copies of $\partial (G)$, $H_1, \dots ,H_m$ with vertex sets
$V_1,\dots,V_m$. Put a copy of $G$, $G_i$, on each $V_i$ such that $\partial (G_i)=H_i$.
The resulting copies of $G$ share no $(r-1)$-shadow and in particular are edge-disjoint.
The union  $F= \cup E(G_i)$ has  $(1-o(1)) (e(G)/|\partial(G)|) \binom{n}{r-1}$ edges and it is $T$-free.
Indeed, $F$ cannot contain a tight tree that moves from one copy of $G_i$ to another.
When we start to build the tree $T=\{ e_1, \dots, e_t\}$ with $e_1\in G_i$ then
all other edges $e_j$ must also belong to $G_i$ so there is no such tree in $F$.
\end{proof}

Note that a similar proof idea was used by Huang and Ma~\cite{HM} to disprove an Erd\H{o}s-S\'os/Verstra\"ete conjecture concerning tight cycles.

\begin{rem} \label{rem:connected}
{\rm It {   follows from the proof of}  Lemma~\ref{lem:equi}
that the lemma still holds if $T$ is replaced with any $r$-graph with 
a {\it connected} $(r-1)$-intersection graph, meaning that the auxiliary graph defined on $E(T)$ where $e,e'\in E(T)$
are adjacent if and only if $|e\cap e'|=r-1$ is connected.
}
\end{rem}

\begin{proof}[Proof of Theorem~\ref{th:equi}]
Define
\begin{eqnarray*}
 \alpha(n,T) &:=&  \ex_r(n, T)/{n \choose r-1},\\
  \beta(n, T)&:=& \max \left\{  \frac{ e(G) }{ |\partial G|} :   \text{ $G$ is a $T$-free $r$-graph on $n$ vertices}\right\}.
  \end{eqnarray*}
Since $\beta(n, T)\leq \beta(n+1,T)$ and $\beta(n,T)\le e(T)-1$ (by~\eqref{tb}) the limit
$\beta=\beta(T)=\lim_{n\to \infty} \beta(n,T)$ exists, is positive, and is equal to its supremum.
Since $\alpha(n,T)\leq \beta(n,T)$ we have $\sup_n \alpha(n,T)\leq \beta$.
The proof of the existence of the limit $\alpha$ can be completed by Lemma~\ref{lem:equi}  showing that for
 every $\eps>0$ taking a $T$-free $r$-graph $G$ with $\frac{e(G)}{|\partial(G)|}> \beta-\eps$ there exists an $n_0$
 such that  $ \alpha(n,T) > \beta -2\eps$ for all $n> n_0$.
\end{proof}

%The proof of Theorem~\ref{FF} is surprisingly short. It uses the so-called {\it weight method}. In this paper, we further explore the weight method
%to obtain an upper bound on the maximum size of an $n$-vertex $T$-free $r$-graph for a family of tight trees that is close to the bound conjectured
%by Kalai. We also solve the conjecture exactly for a few tight $3$-trees that belong to this family.

%%%%%%%%%%%%%%%%%%%%%%%%%%%%%%%%%%%%%%%%%%%%%%%%%%%%

%\begin{thm} \label{two-central-weaker}
%Let $T$ be a tight $3$-tree with $t\geq 5$ edges. Suppose $T$ has a trunk $\{e_1,e_2\}$ of size $2$ such that
%$d_T(e_1\cap e_2)\geq \lfloor\frac{t-1}{3}\rfloor+2$. Let $G$ be an
%$n$-vertex $3$-graph that does not contain $T$.
%Then $e(G)\leq \frac{t-1}{3}|\partial(G)|$. In particular, $e(G)\leq \frac{t-1}{3}\binom{n}{2}$.
%\end{thm}

%%%%%%%%%%%%%%%%%%%%%%%%%%%%%%%%%%%%%%%%%%%%%%%%%%%%%%%

\section{Notation and preliminaries}

Given an $r$-graph $G$ and a subset $D\subseteq V(G)$, we define the {\it link} of $D$ in $G$, denoted
by $L_G(D)$, to be
\[L_G(D)= \{ e\setminus D: e\in E(G), D\subseteq e\}.\]
The {\it degree} of $D$, denoted by $d_G(D)$, is defined to be $|L_G(D)|$; equivalently it is the number of edges of $G$ that contain $D$.
When $G$ is $r$-uniform and $|D|=r-1$, elements of $L_G(D)$ are vertices. In this case,
we also use $N_G(D)$ to denote $L_G(D)$ and call it the {\it co-neighborhood} of $D$ in $G$.
When the context is clear we will drop the subscripts in $L_G(D)$, $N_G(D)$ and $d_G(D)$.
For each $1\leq p\leq r-1$, we define the {\it minimum $p$-degree} of $G$ to be
\[\delta_p(G)=\min\{d_G(D): |D|=p, \quad \mbox{\em and}\quad D\subseteq e \quad\mbox{\em for some}\quad  e\in E(G) \}.\]
Given an $r$-graph $G$, and $D\in \partial(G)$, let $w(D)=\frac{1}{d_G(D)}$.
For each $e\in E(G)$, let
\begin{equation} \label{edge-weight}
w(e)=\sum_{D\in \binom{e}{r-1}}  w(D)= \sum_{D\in \binom{e}{r-1}}\frac{1}{d_G(D)}.
\end{equation}
We call $w$ the {\it default weight function} on $E(G)$ and $\partial(G)$.
The following simple property of the default weight function  is key to the weight method, employed
% by Frankl and F\"uredi
in~\cite{FF} and % by others
in various other works.

\begin{prop} \label{weight}
Let $G$ be an $r$-graph. Let $w$ be the default weight function on $E(G)$ and $\partial(G)$. Then \[\sum_{e\in E(G)} w(e)=|\partial(G)|.\]
\end{prop}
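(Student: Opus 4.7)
The plan is to prove Proposition~\ref{weight} by a straightforward double-counting argument, switching the order of summation in the double sum that defines $\sum_{e \in E(G)} w(e)$.

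First I would expand the definition of $w(e)$ to write
\[
\sum_{e \in E(G)} w(e) \;=\; \sum_{e \in E(G)} \sum_{D \in \binom{e}{r-1}} \frac{1}{d_G(D)}.
\]
The key observation is that each pair $(e, D)$ appearing in this double sum is exactly a pair with $e \in E(G)$, $|D| = r-1$, and $D \subseteq e$. So I would re-index the sum by fixing $D$ first and letting $e$ range over edges of $G$ containing $D$. An $(r-1)$-set $D$ contributes to this sum if and only if there exists some $e \in E(G)$ with $D \subseteq e$, which is precisely the condition $D \in \partial(G)$.

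Hence the double sum rewrites as
\[
\sum_{D \in \partial(G)} \sum_{\substack{e \in E(G) \\ D \subseteq e}} \frac{1}{d_G(D)} \;=\; \sum_{D \in \partial(G)} d_G(D) \cdot \frac{1}{d_G(D)} \;=\; \sum_{D \in \partial(G)} 1 \;=\; |\partial(G)|,
\]
since the number of edges of $G$ containing a fixed $D \in \partial(G)$ is exactly $d_G(D)$, which is positive and cancels the weight $1/d_G(D)$.

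There is no real obstacle here; the only thing to be careful about is that the weights $w(D) = 1/d_G(D)$ are only defined for $D \in \partial(G)$ (where $d_G(D) \geq 1$), and indeed the re-indexed sum only ranges over such $D$, so no division by zero arises. This identity is the standard starting point of the weight method and will be used in the subsequent sections.
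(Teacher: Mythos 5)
Your proof is correct and follows exactly the same double-counting argument as the paper: expand $w(e)$ as a sum over $(r-1)$-subsets, swap the order of summation so that $D$ is fixed first, and observe that the inner sum over edges containing $D$ contributes $d_G(D)\cdot \frac{1}{d_G(D)} = 1$. Your extra remark about no division by zero for $D \in \partial(G)$ is a sensible sanity check that the paper leaves implicit.
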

\begin{proof}
By definition, %we have
\[\sum_{e\in E(G)} w(e) =\sum_{e\in E(G)} \sum_{D\in \binom{e}{r-1}} \frac{1}{d_G(D)}
=\sum_{D\in \partial(G)} \sum_{e\in E(G), D\subseteq e} \frac{1}{d_G(D)}
=\sum_{D\in \partial(G)} 1 =|\partial(G)|.\]
\end{proof}

An $r$-graph $G$ is called {\it $r$-partite} if $V(G)$ can be partitioned into $r$ sets $A_1,\dots, A_r$
such that every edge of $G$ contains one vertex from each $A_i$. We call $(A_1,\dots, A_r)$ an {\it $r$-partition}
of $G$. Equivalently, we say that an $r$-graph $G$ is {\it $r$-colorable} if $G$ if there exists a vertex coloring of $G$ with $r$ colors such
that each edge uses all $r$ colors; we call such a coloring a {\it proper  $r$-coloring} of $G$.
The following proposition follows by  induction on the
number of edges in $T$.
\begin{prop} \label{tight-tree}
Let $r\geq 2$. Every tight $r$-tree $T$ has a unique $r$-partition.   \qed
%is $r$-partite with an $r$-partition that is unique up to permutation of parts. \qed
\end{prop}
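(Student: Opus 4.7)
\medskip

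\noindent\textbf{Proof proposal.}  The plan is induction on $t=e(T)$, exploiting the recursive structure built into the very definition of a tight $r$-tree.  Throughout, by ``unique $r$-partition'' I mean unique as an unordered partition of $V(T)$ into $r$ classes; equivalently, the proper $r$-coloring of $T$ is unique up to a permutation of the color classes.

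The base case $t=1$ is immediate: $T$ is a single edge on $r$ vertices, and the only way to distribute $r$ vertices into $r$ classes so that each class is nonempty (which is forced since the edge must meet every class) is to place each vertex in its own class.

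For the inductive step, fix a tight $r$-tree $T$ with $t\ge 2$ edges and an ordering $e_1,\ldots,e_t$ witnessing \eqref{tree-definition}.  By hypothesis there is a vertex $v\in e_t$ that lies in no $e_j$ with $j<t$, and an index $s<t$ with $e_t-v\subset e_s$.  Let $T'=T-v$, which is a tight $r$-tree on $t-1$ edges, witnessed by the ordering $e_1,\ldots,e_{t-1}$.  By the inductive hypothesis, $T'$ has a unique $r$-partition $(A_1,\ldots,A_r)$.  For existence in $T$, note that because $e_t-v\subseteq e_s\in E(T')$ and $e_s$ hits each of the $r$ parts exactly once, the $r-1$ vertices of $e_t-v$ occupy $r-1$ distinct parts; place $v$ into the unique remaining part.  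Every edge of $T'$ is unaffected, and $e_t$ now meets all $r$ parts, so we obtain an $r$-partition of $T$.

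For uniqueness, let $(B_1,\ldots,B_r)$ be any $r$-partition of $T$.  Restricting to $V(T')$ yields an $r$-partition of $T'$, which by induction must coincide (after relabeling) with $(A_1,\ldots,A_r)$.  Then the $r-1$ vertices of $e_t-v$ already determine $r-1$ of the $r$ parts, so $v$ is forced into the remaining one.  Hence $(B_1,\ldots,B_r)$ agrees with the partition constructed in the existence argument.

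I do not anticipate any real obstacle here: the only point that deserves a sentence of care is that removing $v$ (which has degree $1$ in $T$, since it appears only in $e_t$) leaves the tight-tree ordering intact, so $T'$ is indeed a tight $r$-tree to which induction applies.  Everything else is bookkeeping driven by the observation that each new edge $e_i$ shares $r-1$ vertices with some earlier edge, so its ``new'' vertex is pinned to exactly one class.
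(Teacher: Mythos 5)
Your proof is correct and follows precisely the route the paper indicates: the paper states that the proposition ``follows by induction on the number of edges in $T$'' and leaves the $\qed$ without further detail, and your write-up simply fills in that same induction. The key observations you make---that the last-added vertex $v$ has degree $1$, so $T-v$ is a tight $r$-tree with one fewer edge, and that $e_t-v\subset e_s$ pins $v$ to the unique remaining color class---are exactly the bookkeeping the paper's one-line remark is alluding to.
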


Given $r$-graphs $G$ and $H$, an {\it embedding} of $H$ into $G$ is an injection $f: V(H)\to V(G)$ such that
for each $e\in E(H)$, $f(e)\in E(G)$.

\begin{prop}{\bf (Color-preserving embedding)} \label{colored-mapping}
Let $T$ be a tight $r$-tree with $t$ edges. Let $\phii$ be a proper $r$-coloring of $T$.
Let $G$ be an $r$-partite graph with $\delta_{r-1}(G)\geq t$, where $(A_1,\cdots, A_r)$ is an $r$-partition of $G$.
Then there exists an embedding $f$ of $T$ into $G$ such that for each $u\in V(T)$ $f(u)\in A_{\phii(u)}$.
\end{prop}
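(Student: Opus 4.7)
The natural approach is induction on $i$, extending $f$ one edge at a time along a tight-tree edge ordering $e_1,\dots,e_t$ of $T$ furnished by \eqref{tree-definition}. For each $i\geq 2$ there is a vertex $v_i\in e_i$ not lying in any earlier edge and some index $s<i$ with $e_i-v_i\subseteq e_s$, so once $e_s$ has been embedded the set $e_i-v_i$ already has an image under $f$.

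For the base case $i=1$, pick any edge $e^{*}\in E(G)$ (which exists since $\delta_{r-1}(G)\geq t\geq 1$) and, using the $r$-partite structure of $G$, send the unique vertex of $e_1$ of color $j$ to the unique vertex of $e^{*}\cap A_j$. For the inductive step $i\geq 2$, let $D:=f(e_i-v_i)$. By induction $f(e_s)\in E(G)$ and $D\subseteq f(e_s)$, so $D\in\partial(G)$ and $|N_G(D)|\geq t$. Since $f$ is color-preserving so far, $D$ meets each class $A_k$ with $k\neq\phii(v_i)$ in exactly one vertex, so by $r$-partiteness of $G$ every vertex of $N_G(D)$ must lie in $A_{\phii(v_i)}$. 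The crucial counting step is that each of the $i-1$ previously embedded edges contributes exactly one vertex to $A_{\phii(v_i)}$ (again by $r$-partiteness of $G$), so the number of images already in $A_{\phii(v_i)}$ is at most $i-1\leq t-1<|N_G(D)|$. Hence one can pick an unused $x\in N_G(D)$ and set $f(v_i):=x$, extending $f$ to a color-preserving embedding of $e_1,\dots,e_i$.

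The argument is essentially a straightforward greedy embedding; the only point requiring attention is the counting bound on used vertices in $A_{\phii(v_i)}$, which relies on both $T$ and $G$ carrying compatible $r$-partitions --- $T$ via its unique $r$-partition guaranteed by Proposition~\ref{tight-tree} and $G$ by hypothesis. Injectivity of $f$ is then automatic: distinct classes $A_j$ are disjoint, and within $A_{\phii(v_i)}$ we explicitly choose $x$ to be unused.
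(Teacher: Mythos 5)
Your proof is correct and follows essentially the same inductive strategy as the paper's: order the edges of $T$ as in \eqref{tree-definition}, embed them one at a time, and at each step use $\delta_{r-1}(G)\geq t$ plus a count of already-used vertices to extend greedily. The only cosmetic difference is that you observe $N_G(D)\subseteq A_{\phii(v_i)}$ up front (so color-preservation of the new vertex is automatic and the avoidance count can be restricted to $A_{\phii(v_i)}$), whereas the paper first finds a new vertex $z$ outside $f(T')$ and then verifies $z$ lands in the right part; both counts reduce to the same bound $i-1\leq t-1< d_G(D)$.
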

\begin{proof}
We use induction on $t$. The base step is trivial. Now, suppose $t\geq 2$.
Let $e_1,\dots, e_t$ be an ordering of the edges of $T$ that satisfies \eqref{tree-definition}.
Let $T'=T\setminus e_t$. Then $T'$ is a tight $r$-tree with $t-1$ edges.
By the induction hypothesis, there exists an embedding $f$ of $T'$ into $G$ such that for each $u\in V(T')$,
$f(u)\in A_{\phii(u)}$. Let $D=e_t\cap e_{\alpha(t)}$ and let $v$ be the unique vertex in $e_t\setminus e_{\alpha(t)}=
{   V(T)\setminus V(T')}$.  Then $e_t=D\cup \{v\}$.
Since $f(D)$ is an $(r-1)$-set contained in $f(e_{t-1})$ and $\delta_{r-1}(G)\geq t$,
$d_G(f(D))\geq t$. So there are at least $t$ edges of $G$ containing $f(D)$, at most $|V(T')|-(r-1)=t-1$ of which
contain a vertex of $f(T')$. Hence there exists an edge $e$ in $G$ that contains $f(D)$ and a vertex $z$ outside
$f(T')$. We extend $f$ by letting $f(v)=z$. Now $f$ is  an embedding of $T$ into $G$.

It remains to show that $z\in A_{\phii(v)}$. By permuting colors if needed, we may assume that $\phii(v)=r$.
Since $D\cup \{v\}\in E(T)$ and $\phii$ is proper, the colors used in $D$ are $1,\dots, r-1$. By our assumption, vertices in $f(D)$ lie in
$A_1,\dots, A_{r-1}$, respectively, which implies $z\in A_r$.
\end{proof}
The following proposition is folklore. We include a proof for completeness,

\begin{prop} \label{min-codegree} Let $r\geq 2$ and $q\geq 1$ be integers and
let $G$ be an $r$-graph with $e(G)>q|\partial(G)|$. Then $G$ contains a subgraph $G'$ with $\delta_{r-1}(G')\geq q+1$
{   and
\begin{equation}\label{1208}
e(G')>q|\partial(G')|.
\end{equation}}
\end{prop}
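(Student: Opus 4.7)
The plan is a standard iterative ``cleaning'' procedure: repeatedly peel off every $(r-1)$-set in the shadow whose codegree is too small, and show that the quantity $e(G)-q|\partial(G)|$ never decreases during the process.

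More precisely, I would define the following one-step operation on an $r$-graph $H$: if there exists $D\in\partial(H)$ with $d_H(D)\le q$, then form $H'$ from $H$ by removing all edges of $H$ that contain $D$. Since every edge containing $D$ contributes $D$ itself to $\partial(H)$, after the deletion $D\notin\partial(H')$, so $|\partial(H')|\le |\partial(H)|-1$. On the other hand, we delete at most $q$ edges, so $e(H')\ge e(H)-q$. Therefore
\[
e(H')-q|\partial(H')|\ \ge\ (e(H)-q)-q(|\partial(H)|-1)\ =\ e(H)-q|\partial(H)|.
\]
So the quantity $e(\cdot)-q|\partial(\cdot)|$ is non-decreasing under the operation.

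Now I would iterate this operation starting from $G$. At each step $e(\cdot)$ drops by at least $1$ (because $D\in\partial$ guarantees $d(D)\ge 1$, so at least one edge is removed), so the procedure terminates at some subgraph $G'\subseteq G$ in which no $(r-1)$-set of the shadow has codegree $\le q$; equivalently, $\delta_{r-1}(G')\ge q+1$. By the monotonicity established above,
\[
e(G')-q|\partial(G')|\ \ge\ e(G)-q|\partial(G)|\ >\ 0,
\]
which is exactly \eqref{1208}. In particular $e(G')>0$, so $G'$ is nonempty and $\delta_{r-1}(G')$ is well-defined.

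There is no real obstacle here; the only point that needs a moment of care is the bookkeeping that $|\partial|$ genuinely drops by at least one in each step (which is immediate since the deleted $D$ belongs to no surviving edge) and the observation that both conclusions $\delta_{r-1}(G')\ge q+1$ and $e(G')>q|\partial(G')|$ follow from the same termination criterion combined with the monotone invariant.
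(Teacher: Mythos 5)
Your proof is correct and is essentially the same argument as the paper's: the paper chooses, among subgraphs satisfying \eqref{1208}, one with the fewest edges and shows its minimum codegree must be at least $q+1$ by the same deletion bookkeeping you carry out iteratively. The extremal-choice phrasing and the iterative-peeling phrasing are just two presentations of one idea.
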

\begin{proof}
{   Among subgraphs $G'$ of $G$ satisfying~(\ref{1208}), choose one with the fewest edges. We claim that
$\delta_{r-1}(G')\geq q+1$.
Indeed, if there is $D\in \partial(G')$ that is contained in  at most  $q$
edges of $G'$, then the $r$-graph $G''$ obtained from $G'$ by deleting all edges containing $D$ again satisfies~(\ref{1208}), but has fewer edges than $G'$, a contradiction.}
%Starting from $G$, whenever there exists $D\in \partial(G)$ that is contained in at least one but at most  $q$
%edges of the remaining $r$-graph, we remove  the edges of this $r$-graph that contain $D$. Let $G'$ be
%the $r$-graph we obtain at the end of the process. Since at most $q|\partial(G)|<e(G)$ edges are removed in the process,
%$G'$ is nonempty. By the stopping rule, $\delta_{r-1}(G')\geq q+1$.
\end{proof}

Another useful folklore fact is:

\begin{prop} \label{shad} Let $\alpha$ be a positive real, $r\geq 3$ be an integer and $G$
be an $r$-graph with $e(G)>\frac{\alpha}{r} |\partial(G)|$. Then there is $v\in V(G)$ such that
the link $G_1:=L_G(\{v\})$ satisfies
$$e(G_1)>\frac{\alpha}{r-1} |\partial(G_1)|.
$$
\end{prop}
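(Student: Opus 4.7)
The plan is a straightforward double counting / averaging argument over the $n$ links $L_G(\{v\})$. I first compute two aggregate quantities: the sum of edge counts of links and the sum of shadow sizes of links.

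For the edges, each $e \in E(G)$ contains exactly $r$ vertices $v$, and for each such $v$ the set $e \setminus \{v\}$ is an edge of $L_G(\{v\})$. Hence
\begin{equation*}
\sum_{v \in V(G)} e\bigl(L_G(\{v\})\bigr) = r \cdot e(G).
\end{equation*}
For the shadows, observe that $\partial(L_G(\{v\}))$ consists of those $(r-2)$-sets $S$ with $S \cup \{v\} \in \partial(G)$. So each $D \in \partial(G)$ is counted once for each $v \in D$, giving
\begin{equation*}
\sum_{v \in V(G)} \bigl|\partial(L_G(\{v\}))\bigr| = (r-1)\,|\partial(G)|.
\end{equation*}

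Now I combine the two identities. Using the hypothesis $e(G) > \frac{\alpha}{r}|\partial(G)|$,
\begin{equation*}
\sum_{v} \Bigl[(r-1)\, e(L_G(\{v\})) - \alpha\, |\partial(L_G(\{v\}))|\Bigr]
= (r-1)\bigl[\,r\cdot e(G) - \alpha |\partial(G)|\,\bigr] > 0.
\end{equation*}
Therefore the bracketed expression is strictly positive for at least one vertex $v$, and that $v$ satisfies $e(L_G(\{v\})) > \frac{\alpha}{r-1}|\partial(L_G(\{v\}))|$, as required.

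There is no real obstacle here: the only point to be mildly careful about is that the strict inequality in the hypothesis propagates through to a strict inequality for some individual $v$ (which is immediate, since if every summand were $\leq 0$, the total sum would be $\leq 0$). The hypothesis $r \geq 3$ is only used to make $r - 1 \geq 2$ positive so that we may divide; the argument itself is otherwise uniform in $r$.
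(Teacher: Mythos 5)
Your proof is correct and is essentially the same double-counting argument the paper uses: both establish $\sum_v e(L_G(\{v\})) = r\,e(G)$ and $\sum_v |\partial(L_G(\{v\}))| = (r-1)|\partial(G)|$ and then conclude by averaging. The only cosmetic difference is that the paper phrases the conclusion as a proof by contradiction (assume every link fails the inequality, then sum to contradict the hypothesis), while you form a signed sum and observe it is positive, so some summand must be; these are the same argument.
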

\begin{proof} Suppose that $|L_G(\{v\})|\leq \frac{\alpha}{r-1}|\partial (L_G(\{v\})|$ for each $v\in V(G)$. Then
$$r\cdot e(G)=\sum_{v\in V(G)}d_G(v)=\sum_{v\in V(G)}|L_G(\{v\})|\leq  \frac{\alpha}{r-1}\sum_{v\in V(G)}|\partial (L_G(\{v\})|.
$$
Since each edge $f\in \partial(G)$ contributes $r-1$ to $\sum_{v\in V(G)}|\partial (L_G(\{v\})|$ ($1$ to the link of each its vertex),
this proves the proposition.
\end{proof}

We also need the following fact used in~\cite{FF}.
\begin{prop} \label{arrange}
Let $r$ be a positive integer.
Let $d_1\leq d_2,\dots \leq d_r$ be positive reals.
If $\sum_{i=1}^r \frac{1}{d_i}=s$,
then for each $i\in [r]$, $d_i\geq \frac{i}{s}$.
\end{prop}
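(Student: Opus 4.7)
The plan is to exploit monotonicity of the sequence $(1/d_i)$ directly. Since $d_1\le d_2\le\cdots\le d_r$ are all positive, the reciprocals satisfy $\frac{1}{d_1}\ge \frac{1}{d_2}\ge\cdots\ge\frac{1}{d_r}$, so for any fixed index $i\in[r]$ each of the first $i$ reciprocals is at least $\frac{1}{d_i}$.

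From there I would simply truncate the sum at $i$ and bound below:
\[
s \;=\; \sum_{k=1}^{r}\frac{1}{d_k} \;\ge\; \sum_{k=1}^{i}\frac{1}{d_k} \;\ge\; i\cdot\frac{1}{d_i}.
\]
Rearranging yields $d_i\ge i/s$, which is the desired conclusion.

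There is essentially no obstacle here: the statement is a one-line consequence of the sorting assumption combined with the positivity of the $d_i$. The only thing worth flagging is that positivity (rather than merely nonzero) is needed to ensure that truncating the sum can only decrease it, which is why the hypothesis specifies positive reals.
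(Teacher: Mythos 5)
Your proof is correct and is essentially identical to the paper's: both truncate the sum at index $i$, use monotonicity of the reciprocals to bound the partial sum below by $i/d_i$, and rearrange.
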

\begin{proof}
For each $i\in [r]$, since $\frac{1}{d_1}\geq \dots \geq \frac{1}{d_i}$,
we have $\frac{i}{d_i}\leq \sum_{j=1}^i \frac{1}{d_j}\leq s$. So, $d_i\geq \frac{i}{s}$.
\end{proof}
%%%%%%%%%%%%%%%%%%%%%%%%%%%%%%%%%%%%%%%%%%%%%%%%%%%%%%%%

\section{Proof of Theorem~\ref{main} {    on trees with} bounded trunks}\label{sec:main}

As discussed in the introduction, we prove the following stronger version of Theorem~\ref{main}.

\medskip \noindent
{\bf Theorem~\ref{main}$'$.}
{\em 
Let $n,r,t,c$ be positive integers, where $n\geq r\geq 2$ and $t\geq c\geq 1$.
Let $a(r,c)=(r^r+1-\frac{1}{r})(c-1)$.
Let $T$ be a tight $r$-tree with $t$ edges and $c(T)\leq c$. % that has trunk $T'$ size at most $c$.
If $G$ is an $r$-graph that
does not contain $T$ then }
\begin{equation}\label{eq:main2}
e(G)\leq  \left(\frac{t-1}{r}+a(r,c) \right)|\partial (G)|.
\end{equation}

\medskip
% {\bf Proof.}
\begin{proof}[Proof of Theorem~\ref{main} $'$]
Suppose $T$ is a tight $r$-tree with $t$ edges and
$c(T)=c$. Let $G$ be an $n$-vertex $r$-graph with 
$e(G)> (\frac{t-1}{r}+a(r,c))|\partial (G)|$.
We show that $G$ contains $T$. For convenience, let
\[\gamma=\frac{t-1}{r}+a(r,c)-r^r(c-1)=\frac{t-1}{r}+(1-\frac{1}{r})(c-1).\]
Then
\[e(G)>(\gamma+r^r(c-1))|\partial(G)|.\]
Let $w$ be the default weight function on $E(G)$ and $\partial(G)$.
By Proposition~\ref{weight},  $\sum_{e\in E(G)} w(e) =|\partial(G)|$.
Let
\[H=\{e\in E(G): w(e)\geq \frac{1}{\gamma}\} \mbox { and } L=\{e\in E(G): w(e)<\frac{1}{\gamma}\}.\]
By the definition of $H$,
\[\frac{1}{\gamma}e(H)\leq \sum_{e\in H} w(e) \leq \sum_{e\in G} w(e)=|\partial (G)|.\]
Hence $e(H)\leq \gamma|\partial(G)|$.
Since $e(G)>(\gamma+r^r (c-1))|\partial(G)|$, we have
\[e(L)> r^r(c-1)|\partial(G)|.\]
By averaging, $L$ contains an $r$-partite subgraph $L_1$ with
\begin{equation}\label{L1-size}
e(L_1)\geq \frac{r!}{r^r} e(L)> \frac{r!}{r^r} r^r (c-1)|\partial(G)|\geq r!(c-1)|\partial(G)|.
\end{equation}
Let $(A_1,\dots, A_r)$ be an $r$-partition of $L_1$. Let $e\in E(L_1)$.
Let $\sigma$ be a permutation of $[r]$ such that
\[d_G(e\setminus A_{\sigma(1)})\leq \dots \leq d_G(e\setminus A_{\sigma(r)}).\]
We let $\pi(e)=(\sigma(1),\dots, \sigma(r))$ and refer to it as the {\it pattern} of $e$.
Since there are $r!$ different permutations of $[r]$, by the pigeonhole principle,
some  $\lceil e(L_1)/r!\rceil$ edges $e$ of $L_1$ have the same pattern $\pi(e)$. Let
$L_2$ be the subgraph of $L_1$ consisting of these edges. By \eqref{L1-size},
\[e(L_2)\geq \frac{e(L_1)}{r!}> (c-1)|\partial(G)|.\]
By Lemma~\ref{min-codegree}, $L_2$ contains a subgraph $L^*_2$ such that
\[\delta_{r-1}(L^*_2)\geq c.\]
Recall that all edges in $L^*_2\subseteq L_1$ have the same pattern.
By permuting indices if needed, we may assume that  $ \pi(e)=(1,2,\dots, r)$
for each $ e\in L^*_2$.
By our assumption,
\begin{equation} \label{decreasing-sequence}
 d_G(e\setminus A_1)\leq \cdots \leq d_G(e\setminus A_r)\qquad \forall e\in L^*_2.
\end{equation}
Also, by the definition of $L$,
\[ w(e)=\sum_{i=1}^r \frac{1}{d_G(e\setminus A_i)}<\frac{1}{\gamma} \qquad \forall e\in L^*_2\subseteq L.\]
By Lemma~\ref{arrange} and \eqref{decreasing-sequence}, we have
\begin{equation} \label{codegree-lower}
  d_G(e\setminus A_i) > i\gamma \qquad \forall e\in L^*_2\,\, \qquad \forall i\in [r].
\end{equation}

Now consider a trunk $T'$ of $T$ %, which by our assumption, is a tight $r$-tree
 with $c$ edges.
%Also observe that b
By the definition of a trunk, if $E'$ is any subset of $E(T)\setminus E(T')$ then
$T'\cup E'$ is a tight tree with $c+|E'|$ edges.
By Proposition~\ref{tight-tree}, $T'$ is $r$-partite. Let $(B_1,\dots, B_r)$ be an $r$-partition of $T'$.
For each $e\in E(T)\setminus E(T')$, by definition, there exists $\alpha(e)\in E(T')$ such that
$|e\cap\alpha(e)|=r-1$. Thus, $e\cap \alpha(e)=\alpha(e)\setminus B_i$ for some unique $i\in [r]$.
For each $i\in [r]$, let
\[E_i=\{e\in E(T)\setminus E(T'): e\cap \alpha(e)=\alpha(e)\setminus B_i\}.\]
By permuting the subscripts in the $r$-partition $(B_1,\dots, B_r)$ of $T'$ if needed, we may assume
that
\[|E_1|\leq \dots \leq |E_r|.\]
Since $\sum_{i=1}^r |E_i| = t-c$, this implies
\begin{equation}\label{partial-sum}
 |E_1|+\dots+|E_i|\leq \left\lfloor\frac{i(t-c)}{r}\right\rfloor\qquad \forall i\in [r].
\end{equation}
Since $e(T')=c$, $\delta_{r-1}(L^*_2)\geq c$, $(A_1,\dots, A_r)$ is an $r$-partition of $L^*_2$
and $(B_1,\dots, B_r)$ is an $r$-partition of $T'$,
by Proposition~\ref{colored-mapping}, there exists an embedding $h$ of $T'$ into $L^*_2$ such that
for each $i\in [r]$ every vertex in $B_i$ of $T'$ is mapped into $A_i$. Now consider the edges in $E_1$.
By the definition of $E_1$, %our earlier discussions,
for each $ e\in E_1$ there is $\alpha(e)\in E(T')$ such that  $e\cap \alpha(e)=\alpha(e)\setminus B_1$
and $h(\alpha(e\setminus B_1))=h(\alpha(e))\setminus A_1$. Since $h(\alpha(e))\in L^*_2$,
by \eqref{codegree-lower},
\begin{equation} \label{E1-codegree}
 d_G(h(\alpha(e)\setminus A_1))\geq \lfloor \gamma \rfloor+1\qquad \forall e\in E_1.
\end{equation}
Since $T'\cup E_1$ is a tight tree with
$$|E_1|+c\leq \lfloor\frac{t-c}{r}\rfloor+c
=\lfloor \frac{t-1}{r}+(1-\frac{1}{r})(c-1)\rfloor+1=\lfloor \gamma\rfloor+1$$ edges, and $h$ is an embedding
of $T'$ into $G$, \eqref{E1-codegree} ensures that we can greedily extend $h$ to an embedding of $T'\cup E_1$ into $G$.
In general, let $i\in [r]\setminus \{1\}$ and suppose that we have extended $h$ to an embedding of $T'\cup E_1\cup\dots
\cup E_{i-1}$ into $G$.
By  the definition of $E_i$, %our earlier discussion,
for each $ e\in E_i$ there is $\alpha(e)\in T'$ such that
$ e\cap \alpha(e)=\alpha(e)\setminus B_i$ and $h(e\cap \alpha(e))=h(\alpha(e))\setminus A_i$.
By \eqref{codegree-lower},
\begin{equation} \label{Ei+1-codegree}
 d_G(h(e\cap \alpha(e))\geq \lfloor i\gamma\rfloor+1\qquad \forall e\in E_i.
\end{equation}
Since $T'\cup E_1\cup \dots \cup E_i$ is a tight tree with
$$c+|E_1|+\dots +|E_i|
\leq \lfloor\frac{i(t-c)}{r}\rfloor+c\leq\lfloor i\gamma\rfloor+1$$ edges, and $h$ is already an embedding of $T'\cup E_1\cup \dots \cup E_{i-1}$ into $G$,
\eqref{Ei+1-codegree} ensures that we can greedily extend $h$ further to an embedding
{    of} $T'\cup E_1\cup \dots \cup
E_{i}$ into $G$. Hence we can find an embedding of $T$ into $G$. %This completes the proof of Theorem~\ref{main}.
%%%\qed
\end{proof}
%%%%%%%%%%%%%%%%%%%%%%%%%%%%%%%%%%%%%%%%%%%%%%%%%%%%

\section{Proof of Theorem~\ref{4e} {   on} trees with four edges}\label{sec:4e}

%{\bf Proof of Theorem~\ref{p4}:}
Again, we are proving the shadow version of the theorem:

\medskip \noindent
{\bf Theorem~\ref{4e}$'$.}
{\em  Let $n\geq r\geq 2$ be  integers and $T$ be a tight $r$-tree with $t\leq 4$ edges.
If $G$ is an $r$-graph that
does not contain $T$ then $e(G)\leq \frac{t-1}{r}  |\partial (G)|$.}

\medskip
We start from a partial case of such $T$, the $3$-uniform tight path $P^3_4$ with $4$ edges.
The case of the path $P^3_5$ is still unsolved (to our knowledge).

\begin{lem}\label{P34} Let $n\geq 5$ and
$G$ be an $n$-vertex $3$-graph containing no tight path $P_4^3$ with four edges.
Then $e(G) \leq |\partial(G)| $.
\end{lem}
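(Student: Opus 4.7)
My plan is to argue by contradiction via the weight method, in the style used for Theorem~\ref{FF} in~\cite{FF} and for Theorem~\ref{main}$'$ in Section~\ref{sec:main}. Suppose $e(G) > |\partial(G)|$; the goal is to embed a tight $P_4^3$ in $G$, contradicting the hypothesis.

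First I would reduce to the case $\delta_2(G) \ge 2$ by applying Proposition~\ref{min-codegree} with $q = 1$ and $r = 3$: this yields a subgraph $G^* \subseteq G$ with $\delta_2(G^*) \ge 2$ and $e(G^*) > |\partial(G^*)|$, and $G^*$ remains $P_4^3$-free, so we may replace $G$ by $G^*$. Then I would invoke Proposition~\ref{weight}: $\sum_{e\in E(G)} w(e) = |\partial(G)| < e(G)$, so there is an edge $e_0 = \{a, b, c\}$ with $w(e_0) < 1$. Applying Proposition~\ref{arrange} to the sum $\frac{1}{d(\{b,c\})} + \frac{1}{d(\{a,c\})} + \frac{1}{d(\{a,b\})} < 1$, after relabelling we obtain
$$d_G(\{b, c\}) \ge 2, \quad d_G(\{a, c\}) \ge 3, \quad d_G(\{a, b\}) \ge 4.$$

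Next I would construct a tight $P_4^3$ on vertices $v_1, \ldots, v_6$ by placing the pair $\{a, b\}$ in the middle of the path (so $v_3 = a$, $v_4 = b$) and setting $v_2 = c$ so that $e_2 = \{v_2, v_3, v_4\} = e_0$. The edge $e_1 = \{v_1, c, a\}$ requires $v_1 \in L_G(\{a, c\}) \setminus \{b\}$, which has at least two elements since $d_G(\{a, c\}) \ge 3$. The edge $e_3 = \{a, b, v_5\}$ requires $v_5 \in L_G(\{a, b\}) \setminus \{c, v_1\}$, which has at least two elements since $d_G(\{a, b\}) \ge 4$. Finally, $e_4 = \{b, v_5, v_6\}$ requires $v_6 \in L_G(\{b, v_5\}) \setminus \{a, c, v_1\}$; by $\delta_2(G) \ge 2$ together with $a \in L_G(\{b, v_5\})$ there is always some $w \in L_G(\{b, v_5\}) \setminus \{a\}$, and the only remaining requirement is $w \notin \{c, v_1\}$.

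The main obstacle is guaranteeing this final distinctness condition on $v_6$. To overcome it, I would exploit the multiplicity of the available choices (at least two for both $v_1$ and $v_5$) and, in the worst case, swap the role of $\{a, b\}$ with another pair of $e_0$ to serve as the ``middle'' pair of the target $P_4^3$. If every direct attempt fails, intersecting the resulting constraints over the two choices of $v_1$ forces the rigid structural condition $L_G(\{b, x\}) \subseteq \{a, c\}$ for every $x \in L_G(\{a, b\}) \setminus \{c, v_1\}$; this yields $\{b, c, x\} \in E(G)$ for many such $x$, so that the pair $\{b, c\}$ has large codegree, and a parallel construction with $\{b, c\}$ (or $\{a, c\}$) as the new middle pair then produces the desired $P_4^3$. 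Carrying this structural case analysis through is the main technical content of the proof.
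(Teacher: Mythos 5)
The opening of your argument coincides with the paper's: take (or reduce to) a $P_4^3$-free $3$-graph with $e(G) > |\partial(G)|$ and $\delta_2(G)\ge 2$, use Proposition~\ref{weight} to find an edge $e_0=\{a,b,c\}$ with $w(e_0)<1$, and use Proposition~\ref{arrange} to get the codegree bounds $d(\{a,b\})\ge 4$, $d(\{a,c\})\ge 3$, $d(\{b,c\})\ge 2$ after relabelling. That part is correct and is exactly how the paper starts.

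The problem is in the fallback analysis, which is only sketched and whose key claim does not hold. You assert that if the direct embedding always fails, then ``$\{b,c\}$ has large codegree.'' But from $d(\{a,b\})\ge 4$ and only two usable choices $v_1^{(1)},v_1^{(2)}\in L_G(\{a,c\})\setminus\{b\}$, the intersection argument gives $L_G(\{b,x\})\subseteq\{a,c\}$ only for $x\in L_G(\{a,b\})\setminus\{c,v_1^{(1)},v_1^{(2)}\}$, a set that can have just one element. So you obtain at most one new vertex $x$ with $\{b,c,x\}\in G$, which does not improve the bound $d(\{b,c\})\ge 2$ you already had, and the promised ``parallel construction with $\{b,c\}$ as middle pair'' then starts from exactly the same codegree information. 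The proposal therefore loops without a termination argument, and the ``carrying the structural case analysis through'' remark hides the actual difficulty rather than resolving it.

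The missing idea is the second use of minimality. The paper does not merely reduce to $\delta_2\ge 2$; it fixes a minimal counterexample and carries out a structural analysis around a low-weight edge $abc$: pick distinct $a',b',c'\notin\{a,b,c\}$ with $bca',acb',abc'\in G$, then show (using $P_4^3$-freeness and the codegree bounds) that $ab'b,ac'c\in G$, next that $a'ba,a'ca\in G$, hence $d(ab),d(ac)\ge 4$ and $c'bc,b'cb\in G$. At that point either some pair among $a'a,a'b,a'c$ has degree exceeding $2$, which yields a $P_4^3$ directly, or all three have degree exactly $2$, in which case deleting the three edges $a'ab,a'ac,a'bc$ removes exactly three shadow pairs and produces a smaller counterexample, contradicting minimality. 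Your proposal never appeals to this deletion step; without it, the case where all the relevant codegrees are exactly at their lower bounds is not ruled out, and that is precisely the case your direct construction cannot handle. So the proof as written has a genuine gap, even though its opening moves are the right ones.
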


Observe that for $3$-graphs Lemma~\ref{P34} is stronger than Katona's intersecting 
shadow theorem, since an intersecting $3$-graph must be $P^3_4$-free.
There are many nearly extremal families  with very different structures for Lemma~\ref{P34}
besides the ones obtained from Steiner systems $S(n,5,2)$. Here we mention just two.
First, one can observe that the Erd\H{o}s-Ko-Rado family $G:= \{ g\in
{[n]\choose 3}: 1\in g\}$ is $P_4^3$-free with
$$|\partial (G)|= {n \choose 2}= \frac{n}{n-2} {n-1\choose 2}= \frac{n}{n-2} e(G).
$$
Second, for $n\equiv 0 \mod 3$ one can take a tournament  $\overrightarrow{D}$ on $n/3$ vertices and a partition of $[n]$ into
triples $V_1, V_2, \dots, V_{n/3}$ and define the  $P_4^3$-free triple system as
$$G:= \left\{ g\in {[n]\choose 3}: \text{ for some }\overrightarrow{ij}\in E(\overrightarrow{D}) \text{ one
has } |V_i\cap g|=2, \, |V_j\cap g|=1  \right\}.
  $$
Then we have $|\partial (G)|/e(G) = {n \choose 2}/ 9{n/3 \choose 2}= (n-1)/(n-3)$.

\begin{proof}[Proof of Lemma~\ref{P34}]  Suppose $G$ is an $n$-vertex $3$-graph with the fewest edges such that
\begin{equation}\label{e0}
e(G) > |\partial (G)| \;\mbox{\em and $G$ contains no $P^3_4$.}
\end{equation}
By Proposition~\ref{min-codegree} and the minimality of $G$,
\begin{equation}\label{e1}
\delta_2(G)\geq 2.
\end{equation}
 %for every $e \in H$, the profile is $(a,b,c)$ with $a,b,c \geq 2$.

Let $w$ be the default weight function on $G$ and $\partial (G)$. Since $\sum_{e\in G}w(e)=|\partial (G)|<e(G)$, by~\eqref{e0}, $G$ has
an edge $e_0=abc$ with
\begin{equation}\label{e2}
w(e_0)=\frac{1}{d(ab)}+\frac{1}{d(ac)}+\frac{1}{d(bc)}<1.
\end{equation}
We may assume $d(ab)\leq d(ac)\leq d(bc)$. Similarly to Proposition~\ref{arrange}, in order~\eqref{e2} to hold, we need
\begin{equation}\label{e3}
d(ac)\geq 3\quad\mbox{and}\quad d(bc)\geq 4.
\end{equation}
By~\eqref{e1} and~\eqref{e2}, we can greedily choose distinct $a',b',c'\in V(G)-\{a,b,c\}$ so that
$abc',acb',bca'\in G$.

We claim that
\begin{equation}\label{e4}
ab'b,ac'c\in G.
\end{equation}
Indeed, by~\eqref{e1} $G$ has an edge $ab'x$ for some $x\neq c$. If $x\notin \{b,a'\}$, then $G$ has a tight
$4$-path $a'bcab'x$, a contradiction to~\eqref{e0}. So suppose $x=a'$. By~\eqref{e3},
$G$ has an edge $bcy$ for some
$y\notin \{a,a',b'\}$. Then $G$ has a tight
$4$-path $ybcab'a'$, again a contradiction to~\eqref{e0}. Thus $ab'b\in G$. Similarly, $ac'c\in G$, and~\eqref{e4} holds.

Next we similarly show that
\begin{equation}\label{e5}
a'ba,a'ca\in G.
\end{equation}
Indeed, by~\eqref{e1} $G$ has an edge $a'bx$ for some $x\neq c$. If $x\notin \{a,b'\}$, then $G$ has a tight
$4$-path $b'acba'x$. Suppose $x=b'$. Then by~\eqref{e4},
 $G$ has a tight $4$-path $b'a'bcac'$, again a contradiction to~\eqref{e0}.
Thus $a'ba\in G$. Similarly, $a'ca\in G$, and~\eqref{e5} holds.

Together,~\eqref{e4} and~\eqref{e5} imply that $d_G(ab)\geq 4$ and $d_G(ac)\geq 4$. So, the proof of~\eqref{e4} yields similarly
that $c'bc,b'cb\in G$. If the degree of each of $a'a,a'b,a'c$ is $2$, then the $3$-graph $G_2=G\setminus \{a'ab,a'ac,a'bc\}$
has $|G|-3$ edges and $|\partial(G_2)|= |\partial (G)|-3$, a contradiction to the minimality of $G$. Thus  we may assume
that $G$ has an edge $a'ax$, where $x\notin \{b,c\}$.
By the symmetry between $b'$ and $c'$, we may assume $x\neq b'$.
Then $G$ has a tight
$4$-path $xa'abcb'$.
\end{proof}

Now we are ready to prove Theorem~\ref{4e}$'$.

\begin{proof}[Proof of Theorem~\ref{4e} $'$] 
 We use induction on $r$.

%%% We use double  induction: first on $r$ and second on $e(G)$. {   Why induction on $e(G)$??}0

{\bf Base Step.} $r=2$. In this case, $\partial (G)=V(G)$. For $t\leq 3$ the statement is trivial. Let $t=4$.
There are three non-isomorphic (graph) trees with four edges:
the path $P_4=v_0v_1v_2v_3v_4$ with $4$ edges, the star $S_4$ with center $v_0$ and leaves $v_1,v_2,v_3,v_4$, and
the tree $F_4$ obtained from the star $S_4$ by replacing edge $v_0v_4$ with edge $v_4v_3$.
So we want to show that for every graph $G$
\begin{equation}\label{graphs}
\mbox{\em if $e(G)>\frac{3}{2}|V(G)|$ then $G$ contains each of $P_4,S_4$ and $F_4$.}
\end{equation}
The case of $T=P_4$ is a special case of the Erd\H{o}s-Gallai Theorem~\cite{ErdGal59}. The other two possibilities also
are known from the literature, but we give a short proof. Consider a counterexample $G$ to~\eqref{graphs} with the fewest vertices.
By Proposition~\ref{min-codegree}, $\delta(G)\geq 2$. Since $\sum_{a\in V(G)}d(a)=2e(G)>3|V(G)|$, there is $a\in V(G)$ with $s\geq 4$
neighbors, say, $b_1,\ldots,b_s$. In particular, $G$ contains $S_4$ with center $a$. Since $\delta(G)\geq 2$, $b_1$ has a neighbor $b\neq a$.
So we can embed $F_4$ into $G[\{a,b,b_1,\ldots,b_4\}]$ by sending $v_0$ to $a$, $v_3$ to $b_1$, $v_4$ to $b$, and $v_1$ and $v_2$ to
two vertices in $\{b_2,b_3,b_4\}-b$. Thus~\eqref{graphs} holds.

\medskip
{\bf Induction Step.} Suppose $r\geq 3$, the theorem holds for all $r'<r$, $T$ is a tight $r$-tree, and $G$ is
an $r$-graph with $e(G)> \frac{t-1}{r}  |\partial (G)|$.

{\bf Case 1:} $T$ has a vertex $v$ belonging to all edges. Let $T_1$ be the link $L_T(\{v\})$ of $v$.
It is a tight $(r-1)$-tree with $t$ edges.
By Proposition~\ref{shad},
there is $a\in V(G)$ such that
the link $G_1:=L_G(\{a\})$ satisfies
$e(G_1)>\frac{t-1}{r-1} |\partial(G_1)|.$ By the induction assumption, there is an embedding $\phii$ of
$T_1$ into $G_1$. Then by letting $\phii(v)=a$ we obtain an embedding of $T$ into $G$.

{\bf Case 2:} $T$ has no vertex  belonging to all edges. By the definition of a tight $r$-tree, this is possible only
if $t=4$, $r=3$ and $T=P^3_4$. In this case, we are done by Lemma~\ref{P34}.
\end{proof}
%%%%%%%%%%%%%%%%%%%%%%%%%%%%%%%%%%%%%%%%%%%%%%%%

\section{Concluding remarks}
\parindent=0pt
\begin{itemize}

\item Theorem~\ref{th:equi} shows that  some shadow theorems in the literature are not really stronger than their nonshadow versions.
In particular, this is the case  whenever the forbidden $r$-graph $T$  has a connected $(r-1)$-intersection graph (see Remark~\ref{rem:connected}).

\item It would be interesting to decide if Lemma~\ref{lem:equi} holds for other $r$-graphs besides tight trees and
also for which $r$-graphs $T$ $\lim_{n\to \infty} \ex_r(n,T)/\binom{n}{r-1}$ exists.
In particular, we ask if $\lim_{n\to\infty} \ex_r(n,T)/\binom{n}{r-1}$ exists for each $r$-uniform tree $T$,
where an $r$-graph is a {\it tree} if it is a subgraph of a tight tree. 
This question is not even solved when $r=2$ and $T$ is a graph forest, see, e.g.,~\cite{LLP}. 
See~\cite{FJ-tree} and~\cite{KMV-tree} for recent results on the Tur\'an numbers of  some large families of $r$-uniform trees.
%This question is not even solved when $r=2$ and $T$ is a graph forests, see, e.g.,~\cite{LLP}. 

Note that even for trees,  if the limits $\alpha(T)$ and $\beta(T)$ exist they need not be equal.
(See the proof of  Theorem~\ref{th:equi} for the definition of $\alpha(T)$ and $\beta(T)$.)
Consider a linear path $P=P_4^r$ of length four, $E(P):= \{  \{1,2,\dots, r \},   \{r,r+1,\dots, 2r-1 \},  \{2r-1,2r,\dots, 3r-2 \}, \{3r-2,3r-1,\dots, 4r-3 \}\}$.  It is known~\cite{FJ-tree, KMV-tree} that $\ex(n,T)={n-1 \choose r-1}+ {n-3\choose r-2}+ \eps(n,r)$   for $n> n_0(r)$ and $r\geq 3$, where $\eps(n,r)=0$ except for $r=3$, when it is $0$, $1$ or $2$.   
So we have $\alpha(P)=1$. 
On the other hand, the complete $r$-graph $G$ on $4r-4$ vertices avoids $P_4^r$ and 
  $e(G)/|\partial G|= {4r-4 \choose r}/{4r-4 \choose r-1}= (3r-3)/r  \leq \beta(P)$. 
Consequently, $0 < \alpha(P)< \beta(P)$ for $r\geq 3$.
(Actually, a linear path of length 3 is also an appropriate example).

In the case $r=2$ consider $T=kP_2$, a disjoint union of $k$ paths of length $2$ on $3k$ vertices. Gorgol~\cite{Gorgol} showed that $\alpha(kP_2)= k-1/2$ while considering the complete graph on $3k-1$ vertices we get $\beta(kP_2)\geq (3k-2)/2$.
Moreover,  the Erd\H{o}s-Gallai   Theorem implies that here equality holds here.

\item Recent substantial work by Keller and Lifshitz~\cite{KL} {    studies} the Tur\'an number of some $r$-graphs $F$ with small core. However their 
{\em junta method for hypergraphs}
does not seem to apply here, since it seems to require that $r \gg |C|$ where $C$ is
{   the set of the  vertices of $F$ of degree at least  $2$.}
% a set such that the vertices of $F$ outside $C$ have degree one. 

\item A  direction we will continue to pursue is to reduce the error term $a(r,c)$ in the coefficient in Theorem~\ref{main}. We have some nontrivial improvements.
For example, in the first unsolved case, that is, when $T$ is a $3$-uniform tight tree with $c(T)=2$, we have a proof that $a(3,2)\leq 1/3$. Thus we have $\beta(T)\leq t/3$ and $\ex_3(n,T)\leq (t/3)\binom{n}{2}$. 
\end{itemize}

\paragraph{Acknowledgements.}
This research was partly conducted during an American Institute of Mathematics Structured Quartet Research Ensembles workshop.
The authors gratefully acknowledge the support of AIM.

%%%%%%%%%%%%%%%%%%%%%%%%%%%%%%%%%%%%%%%%%%%%%%%%
%\newpage

{\small

\begin{tabular}{ll}
\begin{tabular}{l}
{\sc Zolt\'an F\" uredi} \\
Alfr\' ed R\' enyi Institute of Mathematics \\
Hungarian Academy of Sciences \\
Re\'{a}ltanoda utca 13-15 \\
H-1053, Budapest, Hungary \\
E-mail:  \texttt{zfuredi@gmail.com}.
\end{tabular}
& \begin{tabular}{l}
{\sc Tao Jiang} \\
Department of Mathematics \\ Miami University \\ Oxford, OH 45056, USA. \\ E-mail: \texttt{jiangt@miamioh.edu}. \\
$\mbox{ }$
\end{tabular} \\ \\
\begin{tabular}{l}
{\sc Alexandr Kostochka} \\
University of Illinois at Urbana-Champaign \\
Urbana, IL 61801, USA \\
and Sobolev Institute of Mathematics \\
Novosibirsk 630090, Russia. \\
E-mail: \texttt {kostochk@math.uiuc.edu}.
\end{tabular} & \begin{tabular}{l}
{\sc Dhruv Mubayi} \\
Department of Mathematics, Statistics \\
and Computer Science \\
University of Illinois at Chicago \\
Chicago, IL 60607, USA. \\
\texttt{E-mail: mubayi@uic.edu}.
\end{tabular} \\ \\
\begin{tabular}{l}
{\sc Jacques Verstra\"ete} \\
Department of Mathematics \\
University of California at San Diego \\
9500 Gilman Drive, La Jolla, California 92093-0112, USA. \\
E-mail: {\tt jverstra@math.ucsd.edu.}
\end{tabular}
\end{tabular}
}

\end{document}